
\documentclass{siamltex}%
\usepackage{amsmath}
\usepackage{amsfonts}
\usepackage{amssymb}
\usepackage{graphicx}
\usepackage{color}%
\usepackage{fullpage}
\setcounter{MaxMatrixCols}{30}
\providecommand{\U}[1]{\protect\rule{.1in}{.1in}}

\newcommand{\field}[1]{\mathbb{#1}}
\newtheorem{algorithm}[theorem]{Algorithm}
\newtheorem{remark}[theorem]{Remark}
\tolerance=5000
\pretolerance=5000
\begin{document}

\title{Convergence of the Square Root Ensemble Kalman Filter\\
 in the Large Ensemble Limit}
\author{Evan Kwiatkowski\footnotemark[1]
\and Jan Mandel\footnotemark[1]\ \footnotemark[2]}
\maketitle

\begin{abstract}
Ensemble filters implement sequential Bayesian estimation by representing the
probability distribution by an ensemble mean and covariance. Unbiased square
root ensemble filters use deterministic algorithms to produce an analysis
(posterior) ensemble with a prescribed mean and covariance, consistent with the
Kalman update. This includes several filters used in practice, such as the
Ensemble Transform Kalman Filter (ETKF), the Ensemble Adjustment Kalman Filter
(EAKF), and a filter by Whitaker and Hamill. We show that at every time index,
as the number of ensemble members increases to infinity, the mean and
covariance of an unbiased ensemble square root filter converge to those of the
Kalman filter, in the case of a linear model and an initial distribution of
which all moments exist. The convergence is in all $L^{p}$, $1\le p<\infty$, and the convergence
rate does not depend on the model or data dimensions. The result holds in 
infinitely dimensional Hilbert spaces as well.

\end{abstract}

\renewcommand{\thefootnote}{\fnsymbol{footnote}} \footnotetext[1]{Department
of Mathematical and Statistical Sciences, University of Colorado Denver,
Denver, CO, USA. Partially supported by the U.S. National Science Foundation
under the grant DMS-1216481.} \footnotetext[2]{Institute of Computer Science,
Academy of Sciences of the Czech Republic, Prague, Czech Republic. Partially
supported by the Czech Science Foundation under the grant
13-34856S.}

\begin{keywords}
Data assimilation, $L^p$ laws of large numbers, Hilbert space,
ensemble Kalman filter
\end{keywords}
\begin{AMS}
60F25, 65C05, 65C35
\end{AMS}

\section{Introduction}

\label{sec:introduction}

Data assimilation uses Bayesian estimation to incorporate observations into
the model of a physical system. The model produces the forecast estimate, and the incorporation of the observation data produces the analysis estimate. The resulting analysis is used to initialize the
next run of the model, producing the next forecast, which subsequently is used
in the next analysis, and the process thus continues. Data assimilation is
widely used, e.g., in geosciences \cite{Kalnay-2003-AMD}.

The Kalman filter represents probability distributions by the mean and
covariance. It is an efficient method when the probability
distributions are close to Gaussian. However, in applications, the dimension
of the state is large, and it is not feasible to even store the covariance of the system state exactly. Ensemble Kalman filters are variants of Kalman filters in
which the state probability distribution is represented by an ensemble of
states, and the state mean and covariance are estimated from the ensemble
\cite{Evensen-2009-DAE}. The dynamics of the model, which could be nonlinear
in this case, are applied to each ensemble member to produce the forecast
estimate. The simplest estimate of covariance from the ensemble is the sample
covariance, which, however, suffers from sampling errors for small ensembles.
For this reason, localization techniques, such as tapering
\cite{Furrer-2007-EHP} and covariance inflation \cite{Anderson-2007-ACI,Anderson-1999-MCI},
need to be used for small ensembles \cite[Ch.~15]{Evensen-2009-DAE}. Simulation
studies have shown that the ensemble filters with relatively small ensembles
and with localization and covariance inflation are able to efficiently handle
nonlinear dynamics and high dimension. 

The major differences between ensemble Kalman filters are in the way the
analysis ensemble is produced from the forecast and the data. The analysis
ensemble can be formed in a stochastic or deterministic manner. The purpose of
this paper is to examine ensemble Kalman filters that use a deterministic
method to produce an analysis ensemble with exactly the desired statistics.
Such filters are called unbiased square root filters, because the ensemble
mean equals the prescribed mean, and construction of the analysis ensemble to
match the prescribed ensemble covariance leads to taking the square root of a
matrix. This includes several filters used in practice, such as the Ensemble
Transform Kalman Filter (ETKF)
\cite{Bishop-2001-ASE,Hunt-2007-EDA,Wang-2004-WIB}, the Ensemble Adjustment
Kalman Filter (EAKF) \cite{Anderson-2001-EAK}, and a filter by Whitaker and
Hamill \cite{Whitaker-2002-EDA}. Criteria necessary for an ensemble square
root filter to be unbiased are discussed in
\cite{Livings-2008-UES,Sakov-2008-IFE,Tippett-2003-ESR}. The base square root
ensemble Kalman filter (cf., Algorithm~\ref{alg:srf} below) is often modified to support 
localization and covariance inflation for small ensembles and nonlinear problems. 
Since we are interested in large ensemble asymptotics and the linear case, 
localization and covariance inflation are not studied in this paper.

An important question for understanding ensemble filters is a law of large
numbers as the size of the ensemble grows to infinity, even if practical
implementations are necessarily limited to small ensembles. In
\cite{LeGland-2011-LSA,Mandel-2011-CEK}, it was proved independently for the
version of the ensemble Kalman filter with randomized data from
\cite{Burgers-1998-ASE}, that the ensemble mean and the covariance matrix
converge to those of the Kalman filter, as the number of ensemble members
grows to infinity. Both analyses obtain convergence in all $L^{p}$, $1\leq
p<\infty$, but the convergence results are not independent of the space
dimensions. The proof in \cite{Mandel-2011-CEK} relies on the fact that
ensemble members are exchangeable, uses uniform integrability, and does not
provide any convergence rates, while \cite{LeGland-2011-LSA} uses stochastic
inequalities for the random matrices and vectors to obtain convergence with
the usual Monte Carlo rate $1/\sqrt{N}$, but relies on entry-by-entry arguments.

Here, we show that at every time index, as the number of ensemble members
increases to infinity, the mean and covariance of an unbiased ensemble square
root filter converge to those of the Kalman filter, in the case of a linear
model and an initial distribution of which all moments exist. The convergence
is in all $L^{p}$, with the usual rate $1/\sqrt{N}$, the constant does not
depend on the dimension, and the result holds in the infinitely
dimensional case as well. The constants in the estimate are constructive and
depend only on the model and the data, namely the norms of the model operators
and of the inverse of the data covariance, and of the vectors given. The
analysis builds on some of the tools developed in \cite{LeGland-2011-LSA}, and
extends them to obtain bounds on the operators involved in the formulation of
square root ensemble filters, independent of the state space and data space
dimensions, including the infinitely dimensional case. The square root
ensemble filters are deterministic, which avoids technical complications
associated with data perturbation in infinite dimension. Convergence of
ensemble filters with data perturbation, independent of the dimension, will be
studied elsewhere.

The main idea of the analysis is simple: by the law of large numbers, the
ensemble mean and covariance of the initial ensemble converge to those
of the background distribution. Every analysis step is a
continuous mapping of the mean and the covariance, and the convergence in the
large ensemble limit follows. The analysis quantifies this argument.

Since the principal observation is that the mean and the covariance are
transformed in exactly the same way as in the Kalman filter, the continuity
estimates in this paper can also be interpreted as a type of stability of the
Kalman filter for arbitrary but fixed time with respect to perturbations of
the initial distribution. The continuity of the Kalman filter formulas is 
foundational, and, as such, has not received much attention. The
pointwise estimates in the present paper are somewhat stronger, and they imply
local Lipschitz continuity with polynomial growth of the constant. The 
$L^{p}$ estimates can be interpreted as the stability of each Kalman filter
time step separately with respect to random perturbations of the initial mean
vector and covariance operator. This type of stability of the Kalman filter
seems not to have been studied before. However, there has been a keen interest
in long-time stability, particularly the effect of a finite error in the
initial distribution diminishing over time, both theoretically for the Kalman
filter (e.g., \cite{LeBreton-2000-AOA,Ocone-1996-ASO,Zhang-1991-LSE}) and
empirically for the ensemble Kalman filter in meteorology (e.g.,
\cite{Zhang-2004-IIE}). For a class of abstract dynamical systems with the
whole state observed, the ensemble Kalman filter with a fixed ensemble size
was proved to be well-posed and not blowing up faster than exponentially, and
to stay within a bounded distance from the truth if sufficiently large
covariance inflation is used \cite{Kelly-2014-WAE}. 

The paper is organized as follows: in Section \ref{sec:notation} we introduce notation and review select background concepts. Section \ref{sec:algorithms}
contains statements of the Kalman filter and the unbiased square root filter,
and shows some basic properties, which are needed later. In Section
\ref{sec:continuity}, we show the continuity properties of the transformation of
the statistics from one time step to the next. Section \ref{sec:LpLLN}
contains a derivation of the $L^{p}$ laws of large numbers, needed for the
convergence of the statistics of the initial ensemble. Section
\ref{sec:convergence} presents the main result. 

\section{Notation and preliminaries}

\label{sec:notation}We will work with random elements with values in a Hilbert
space $V$. Readers interested in finite dimension only can think of random
vectors in $V=\mathbb{R}^{n}$. All notations and proofs are presented in a way
that applies in $\mathbb{R}^{n}$, as well as in a more general Hilbert space.

\subsection{Finite dimensional case}

Vectors $u\in\mathbb{R}^{n}$ are columns, and the inner product is
$\left\langle u,v\right\rangle =u^{\ast}v$ where $u^{\ast}$ denotes transpose, and $|u|=\left\langle u,u\right\rangle ^{1/2}$ is the vector norm of $u$. Throughout this paper, we will use single bars $|\cdot|$ for deterministic norms and double bars $\|\cdot\|$ for stochastic norms.
We will use the notation $\left[  V\right]  $ for the space of all $n\times n$
matrices. For a matrix $M$, $M^{\ast}$ denotes transpose, and $\left\vert
M\right\vert $ stands for the spectral norm. We will also need the
Hilbert-Schmidt norm (more commonly called Frobenius norm) of matrices,
induced by the corresponding inner product of two $n\times n$ matrices,%
\[
|A|_{\mathrm{HS}}=\left(  \sum_{i,j=1}^{n}a_{ij}^{2}\right)  ^{1/2}%
=\left\langle A,A\right\rangle _{\mathrm{HS}}^{1/2},\quad\left\langle
A,B\right\rangle _{\mathrm{HS}}=\sum_{i,j=1}^{n}a_{ij}b_{ij}.
\]
The Hilbert-Schmidt norm dominates the spectral norm,%
\begin{equation}
\left\vert A\right\vert \leq\left\vert A\right\vert _{\mathrm{HS}},
\label{eq:hilbert-schmidt-dominates}%
\end{equation}
for any matrix $A$. The notation $A\geq0$ means that $A$ is symmetric and
positive semidefinite, $A>0$ means symmetric positive definite. For $Q\geq0$, $X\sim
N\left(  u,Q\right)  $ means that random vector $X$ has the normal
distribution on $\mathbb{R}^{n}$, with mean $u$ and covariance matrix $Q$. For
vectors $u,v\in\mathbb{R}^{n}$, their tensor product is the $n\times n$
matrix,
\[
u\otimes v=uv^{\ast}\in\left[  V\right]  .
\]
It is evident that%
\begin{equation}
|u\otimes v|_{\mathrm{HS}}=|u||v|, \label{eq:hilbert-schmidt-tensor}%
\end{equation}
because%
\[
|u\otimes v|_{HS}^{2}=\sum_{i=1}^{n}\sum_{j=1}^{n}\left(  u_{i}v_{j}\right)
^{2}=\sum_{i=1}^{n}u_{i}^{2}\sum_{j=1}^{n}v_{j}^{2}=|u|^{2}|v|^{2}.
\]

\subsection{Hilbert space case}

\label{sec:hilbert}

Readers interested in finite dimension only should skip this section. In the
general case, $V$ is a separable Hilbert space equipped with inner product
$\left\langle u,v\right\rangle $ and the norm $\left\vert u\right\vert
=\left\langle u,u\right\rangle ^{1/2}$. The space of all bounded operators
from Hilbert space $U$ to Hilbert space $V$ is $\left[  U,V\right]  $, and
$\left[  V\right]  =[V,V]$. For a bounded linear operator $A\in\left[
U,V\right]  $, $A^{\ast}\in\left[  V,U\right]  $ denotes the adjoint operator,
and $\left\vert A\right\vert $ is the operator norm. The Hilbert-Schmidt norm
of a linear operator on $V$ is defined by
\[
|A|_{\mathrm{HS}}=\left\langle A,A\right\rangle _{\mathrm{HS}}^{1/2}%
,\quad\left\langle A,B\right\rangle _{\mathrm{HS}}=\sum_{i=1}^{\infty
}\left\langle Ae_{i},Be_{i}\right\rangle ,
\]
where $\left\{  e_{i}\right\}  $ is any complete orthonormal sequence; the
values do not depend on the choice of $\left\{  e_{i}\right\}  $. An operator
on $V$ is called a Hilbert-Schmidt operator if $\left\langle A,B\right\rangle
_{\mathrm{HS}}<\infty$, and $\mathrm{HS}\left(  V\right)  $ is the space of
all Hilbert-Schmidt operators on $V$. The Hilbert-Schmidt norm again dominates
the spectral norm, so (\ref{eq:hilbert-schmidt-dominates}) holds, and
$\mathrm{HS}\left(  V\right)  \subset\left[  V\right]  $. The importance of
$\mathrm{HS}\left(  V\right)  $ for us lies in the fact that $\mathrm{HS}%
\left(  V\right)  $ is a Hilbert space, while $\left[  V\right]  $ is not. 

The
notation $A\geq0$ for operator $A\in\left[  V\right]  $ means that $A$
is symmetric, $A=A^{\ast}$, and positive semidefinite, $\left\langle
Au,u\right\rangle \geq0$ for all $u\in V$. The notation $A>0$ means here that
$A$ is symmetric and \emph{bounded below}, that is, $\left\langle
Au,u\right\rangle \geq\alpha\left\vert u\right\vert ^{2}$ for all $u\in V$ and
some $\alpha>0.$ In particular, if $A>0$, then $A^{-1}\in\left[  V\right]  $. For $A$, $B\in\left[  V\right]  ,$ $A\leq B$ means that $A$ and $B$ are
symmetric, and $B-A\geq0.$ It is well known from spectral theory that
$0\leq A\leq B$ implies $|A|\leq|B|$. Let $\rho(A)$ denote the spectral radius of $A$, and if $A$ is symmetric, then $\rho(A)=|A|$. For a symmetric operator $A\ge 0$, there exists a 
unique symmetric $A^{1/2}\ge 0$ such that $A^{1/2}A^{1/2}=A$, and if $A>0$, then also $A^{1/2}>0$ and $A^{-1/2}=(A^{1/2})^{-1}\in\left[  V\right]$.

An operator $A\geq0$ is trace class if $\operatorname*{Tr}A<\infty$, where
$\operatorname*{Tr}A$ is the trace of $A$, defined by
\[
\operatorname*{Tr}A=\sum_{i=1}^{\infty}\left\langle Ae_{i},e_{i}\right\rangle
.
\]
If $A\geq0$ is trace class, then $A$ is Hilbert-Schmidt, because $\left\vert
A\right\vert _{\mathrm{HS}}\leq\operatorname*{Tr}A$.

For vectors $u,v\in V$, their tensor product $u\otimes v\in\left[  V\right]  $
is now a mapping defined by%
\[
u\otimes v:w\in V\mapsto u\left\langle v,w\right\rangle ,
\]
and the proof of (\ref{eq:hilbert-schmidt-tensor}) becomes%
\begin{align*}
\left\vert x\otimes y\right\vert _{HS }^{2}  &  =\sum
_{i=1}^{\infty}\left\vert \left(  x\otimes y\right)  e_{i}\right\vert
^{2}=\sum_{i=1}^{\infty}\left\vert x\left\langle y,e_{i}\right\rangle
\right\vert ^{2}  =\left\vert x\right\vert ^{2}\sum_{i=1}^{\infty}\left\vert \left\langle
y,e_{i}\right\rangle \right\vert ^{2}=\left\vert x\right\vert ^{2}\left\vert
y\right\vert ^{2},
\end{align*}
from Bessel's equality.

The mean of a random element $E\left(  X\right)  \in V$ is defined by%
\[
\left\langle E\left(  X\right)  ,y\right\rangle =E\left(  \left\langle
X,y\right\rangle \right)  \quad\text{for all }y\in V.
\]
The mean of the tensor product of two random elements is defined by%
\[
\left\langle E\left(  X\otimes Y\right)  w,y\right\rangle =E(\left\langle
X,y\right\rangle \left\langle Y,w\right\rangle )\quad\text{ for all }w,y\in
V.
\]
The covariance of a
random element $X$ (defined below in (\ref{eq:def-covariance})) exists when the second moment $E(\left\vert X\right\vert
^{2})<\infty$, and the proposed covariance is a trace class operator. On the other
hand, if $Q\geq0$ is trace class, the normal distribution $N\left(
u,Q\right)  $ can be defined as a probability measure on $V$, consistently
with the finite dimensional case. Just as in the finite dimensional case, if
$X\sim N\left(  u,Q\right)  $, then $X$ has all moments $E\left(  \left\vert
X\right\vert ^{p}\right)  <\infty$, $1\leq p<\infty$. See
\cite{DaPrato-2006-IIA,Kuo-1975-GMB,Lax-2002-FA} for further details.

\subsection{Common definitions and properties}

The rest of the background material is the same regardless if $V=\mathbb{R}^{n}$ or if $V$ is a general
Hilbert space. To unify the nomenclature, matrices are identified with the
corresponding operators of matrix-vector multiplication. Covariance of a
random vector $X\in V$ is defined by%
\begin{align}
\operatorname{Cov}(X)  &  =E(\left(  X-E\left(  X\right)  \right)
\otimes\left(  Y-E\left(  Y\right)  \right)  )\nonumber\\
&  =E(X\otimes Y)-E(X)\otimes E(Y), \label{eq:def-covariance}%
\end{align}
if it exists. For $1\leq p<\infty$, the space of all random elements $X\in V$ with finite moment $E\left(  \left\vert X\right\vert ^{p}\right)
<\infty$ is denoted by $L^{p}\left(  V\right)  $, and it is equipped with the
norm $\Vert X\Vert_{p}=(E(|X|^{p}))^{1/p}.$ If $1\leq p\leq q<\infty$ and
$X\in L^{q}(V)$, then $X\in L^{p}(V)$ and $\left\Vert X\right\Vert _{p}%
\leq\left\Vert X\right\Vert _{q}$. If $X,Y\in L^{2}(V)$ are independent, then
$\operatorname{Cov}(X,Y)=0$ and $E\left(  \left\langle X,Y\right\rangle
\right)  =0$. The following lemma will be used repeatedly in obtaining $L^{p}$
estimates. It is characteristic of our approach to use higher order
norms to bound lower order norms.

\begin{lemma}
[$L^{p}$ Cauchy-Schwarz inequality]\label{lem:lp-cauchy}If $U,V\in L^{2p}(V)$
and $1\leq p<\infty$, then
\[
\Vert|U||V|\Vert_{p}\leq\Vert U\Vert_{2p}\Vert V\Vert_{2p}.
\]

\end{lemma}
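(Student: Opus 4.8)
The plan is to reduce everything to the ordinary (scalar) Cauchy--Schwarz inequality in $L^2(\Omega)$ applied to the nonnegative random variables $|U|$ and $|V|$. First I would write out the left-hand side: by definition of the $L^p$ norm,
\[
\bigl\Vert\,|U||V|\,\bigr\Vert_p^p = E\bigl(\,(|U||V|)^p\,\bigr) = E\bigl(\,|U|^p|V|^p\,\bigr).
\]
Now I would apply the classical Cauchy--Schwarz inequality in $L^2(\Omega)$ to the two nonnegative scalar random variables $|U|^p$ and $|V|^p$, which are in $L^2(\Omega)$ precisely because $U,V\in L^{2p}(V)$, i.e. $E(|U|^{2p})<\infty$ and $E(|V|^{2p})<\infty$:
\[
E\bigl(\,|U|^p|V|^p\,\bigr) \le \bigl(E(|U|^{2p})\bigr)^{1/2}\bigl(E(|V|^{2p})\bigr)^{1/2}.
\]
Raising both sides to the power $1/p$ and recognizing $\bigl(E(|U|^{2p})\bigr)^{1/(2p)} = \Vert U\Vert_{2p}$ and similarly for $V$ gives exactly
\[
\bigl\Vert\,|U||V|\,\bigr\Vert_p \le \Vert U\Vert_{2p}\,\Vert V\Vert_{2p},
\]
which is the claim.

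There is essentially no obstacle here; the only point worth noting is that the finiteness hypothesis $U,V\in L^{2p}(V)$ is exactly what is needed to guarantee that $|U|^p,|V|^p\in L^2(\Omega)$, so that the scalar Cauchy--Schwarz step is legitimate and the right-hand side is finite. The statement and proof are identical whether $V=\mathbb{R}^n$ or $V$ is a general separable Hilbert space, since only the scalar quantities $|U|,|V|$ and the expectation over the underlying probability space are involved. This lemma is really just a repackaging of Hölder's inequality with conjugate exponents $2$ and $2$ applied to the functions $\omega\mapsto|U(\omega)|^p$ and $\omega\mapsto|V(\omega)|^p$; I would present it in the short form above so it can be cited cleanly in the subsequent $L^p$ estimates, where the recurring pattern is to bound a product norm $\Vert\,|U||V|\,\Vert_p$ by a product of higher-order norms.
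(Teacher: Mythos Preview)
Your proof is correct and follows exactly the same approach as the paper: expand $\Vert|U||V|\Vert_p^p=E(|U|^p|V|^p)$, apply the scalar Cauchy--Schwarz inequality in $L^2$ to $|U|^p$ and $|V|^p$, and take the $p$-th root. The additional remarks you make about the finiteness hypothesis and the irrelevance of the dimension of $V$ are accurate and do not alter the argument.
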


\begin{proof}
By the Cauchy-Schwartz inequality in $L^{2}(V),$ applied to the random
variables $|U|^{p}$ and $|V|^{p}$, which are in $L^{2}(V)$,%
\begin{align*}
\Vert|U||V|\Vert_{p}^{p}  &  =E\left(  |U|^{p}|V|^{p}\right)  \leq\left(
E|U|^{2p}\right)  ^{1/2}\left(  E|V|^{2p}\right)  ^{1/2}\\
&  =\left(  \left(  E|U|^{2p}\right)  ^{1/2p}\left(  E|V|^{2p}\right)
^{1/2p}\right)  ^{p}=\Vert X\Vert_{2p}^{p}\Vert Y\Vert_{2p}^{p}.
\end{align*}
Taking the $p$-th root of both sides yields the desired inequality.
\end{proof}

An ensemble $\boldsymbol{X}_{N}$ consists of random elements $X_{i}$,
$i=1,\ldots,N$. The ensemble mean is denoted by $\overline{\boldsymbol{X}}%
_{N}$ or $E_{N}(\boldsymbol{X}_{N})$, and is defined by%
\begin{equation}
\overline{\boldsymbol{X}}_{N}=E_{N}(\boldsymbol{X}_{N})=\frac{1}{N}\sum
_{i=1}^{N}X_{i}.\label{eq:def-ens-mean}%
\end{equation}
The ensemble covariance is denoted by $Q_{N}$ or $C_{N}\left(  \boldsymbol{X}%
_{N}\right)  $, and is defined by%
\begin{align}
Q_{N}=C_{N}\left(  \boldsymbol{X}_{N}\right)   &  =\frac{1}{N}\sum_{i=1}%
^{N}(X_{i}-\overline{\boldsymbol{X}}_{N})\otimes(X_{i}-\overline
{\boldsymbol{X}}_{N})\nonumber\\
&  =E_{N}\left(  \boldsymbol{X}_{N}\otimes\boldsymbol{X}_{N}\right)
-E_{N}\left(  \boldsymbol{X}_{N}\right)  \otimes E_{N}\left(  \boldsymbol{X}%
_{N}\right)  ,\label{eq:def-ens-covariance}%
\end{align}
where 
\[
\boldsymbol{X}_{N}\otimes\boldsymbol{X}_{N}=X_{1}\otimes
X_{1}+\ldots+X_{N}\otimes X_{N}.
\]
We use $N$ instead of the more
common $N-1$, which would give an unbiased estimate, because it 
allows writing the sample covariance with the sample mean in (\ref{eq:def-ens-covariance})
without the additional multiplicative factors $N/(N-1)$.
Note that convergence as $N\rightarrow\infty$ is not affected.

\section{Definitions and basic properties of the algorithms}

\label{sec:algorithms} In the Kalman filter, the probability distribution of
the state of the system is described by its mean and covariance.
We first consider the analysis step, which uses Bayes' theorem to incorporate
an observation into the forecast state and covariance to produce the analysis
state and covariance. The system state $X$ is an $\mathbb{R}^{n}$-valued
random vector. We denote by $\overline{X}^{\mathrm{f}}\in\mathbb{R}^{n}$ the
forecast state mean, $Q^{\mathrm{f}}\in\mathbb{R}^{n\times n}$ the forecast
state covariance, and $\overline{X}^{\mathrm{a}}\in\mathbb{R}^{n}$ and
$Q^{\mathrm{a}}\in\mathbb{R}^{n\times n}$ the analysis state mean and
covariance, respectively. The observation data vector is $d\in\mathbb{R}^{m}$,
where $d-HX\sim N\left(  0,R\right)  $, with $H\in\mathbb{R}^{m\times n}$ the
linear observation operator, and $R\in\mathbb{R}^{m\times m}$, $R>0$, is the observation
error covariance. 

Given the forecast mean and covariance, the 
Kalman Filter analysis mean
and covariance are%
\begin{align}
\overline{X}^{\mathrm{a}}  &  =\overline{X}^{\mathrm{f}}+K(d-H\overline
{X}^{\mathrm{f}})\label{eq:filtering-analysis-mean}\\
Q^{\mathrm{a}}  &  =Q^{\mathrm{f}}-Q^{\mathrm{f}}H^{\ast}(HQ^{\mathrm{f}%
}H^{\ast}+R)^{-1}HQ^{\mathrm{f}}=Q^{\mathrm{f}}-KHQ^{\mathrm{f}},
\label{eq:filtering-analysis-covariance}%
\end{align}
where
\begin{equation}
K=Q^{\mathrm{f}}H^{\ast}(HQ^{\mathrm{f}}H^{\ast}+R)^{-1}
\label{eq:kalman-gain}%
\end{equation}
is the Kalman gain matrix. See, e.g.,
\cite{Anderson-1979-OF,Jazwinski-1970-SPF,Simon-2006-OSE}. 

In the general case, the state space $\mathbb{R}^{n}$ and the data space $\mathbb{R}^{m}$
above become separable Hilbert spaces, one or both of which may be infinitely dimensional,
and matrices become bounded linear operators. The Kalman filter 
formulas (\ref{eq:filtering-analysis-mean})--(\ref{eq:kalman-gain}) remain the same.
The assumption $R>0$ guarantees that the inverse  in $(\ref{eq:kalman-gain})$ 
is well-defined. In particular, when the data space is infinitely dimensional, the definition of a positive
definite operator in Section~\ref{sec:hilbert} implies that this inverse is bounded. In that case, $R$ 
cannot be the covariance of a probability distribution in the classical sense, because $R>0$ cannot be of trace class
in infinite dimension.
However, all statistical estimates will be in the state space, not the data space, so this is not a problem.
 
For future
reference, we introduce the operators
\begin{align}
\mathcal{K}(Q)  &  =QH^{\ast}(HQH^{\ast}+R)^{-1},\label{eq:def-operator-K}\\
\mathcal{B}(X,Q)  &  =X+\mathcal{K}(Q)(d-HX),\label{eq:def-operator-B}\\
\mathcal{A}\left(  Q\right)   &  =Q-QH^{\ast}(HQH^{\ast}+R)^{-1}%
HQ\label{eq:def-operator-A-without-kalman-gain}\\
&  =Q-\mathcal{K}(Q)HQ, \label{eq:def-operator-A}%
\end{align}
which evaluate the Kalman gain, analysis mean, and the analysis covariance,
respectively, in the Kalman filter equations (\ref{eq:filtering-analysis-mean}%
)--(\ref{eq:kalman-gain}). 
We are now ready to state the complete Kalman
filter for reference. The superscript $^{\left(  k\right)  }$ signifies
quantities at time step $k$.

\begin{algorithm}
[Kalman filter]\label{alg:kf}
Suppose that the model $\mathcal{M}^{(k)}$ at each time $k\geq1$ is linear,
$\mathcal{M}^{(k)}\left(  X\right)  =M^{(k)}X+b^{\left(  k\right)  }$, and the
initial mean $\overline{X}^{\left(  0\right)  }=\overline{X}^{(0),\mathrm{a}}$
and the background covariance $B=Q^{(0),\mathrm{a}}$ of the state are given.
At time $k$, the analysis mean and covariance from the previous time $k-1$ are
advanced by the model,%
\begin{align}
\overline{X}^{(k),\mathrm{f}} &  =M^{\left(  k\right)  }\overline
{X}^{(k-1),\mathrm{a}}+b^{\left(  k\right)  }%
,\label{eq:filtering-forecast-mean}\\
Q^{(k),\mathrm{f}} &  =M^{\left(  k\right)  }Q^{(k-1),\mathrm{a}}M^{\left(
k\right)  ^{\ast}}.\label{eq:filtering-forecast-covariance}%
\end{align}
The analysis step incorporates the observation $d^{\left(  k\right)  }$,
where $d^{\left(  k\right)  }-H^{\left(  k\right)  }X^{(k),\mathrm{f}}$ has mean zero and covariance $R^{\left(  k\right)  }$, and it gives the
analysis mean and covariance%
\begin{align}
\overline{X}^{(k),\mathrm{a}} &  =\mathcal{B}%
(\overline{X}^{(k),\mathrm{f}},Q^{(k),\mathrm{f}}),\label{eq:kf-mean}\\
Q^{(k),\mathrm{a}} &  =\mathcal{A}(Q^{(k),\mathrm{f}%
}),\label{eq:kf-covariance}%
\end{align}
where $\mathcal{B}$ and $\mathcal{A}$ are defined by (\ref{eq:def-operator-B}) and (\ref{eq:def-operator-A})
respectively, with $d,$ $H,$ and $R$, at time $k$.
\end{algorithm}


In many applications, the state dimension $n$ of the system is large and
computing or even storing the exact covariance of the system state is
computationally impractical. Ensemble Kalman filters address this concern.
Ensemble Kalman filters use a collection of state vectors, called an ensemble,
to represent the distribution of the system state. This ensemble will be
denoted by $\boldsymbol{X}_{N}$, comprised of $N$ random elements $X_{i}\in
\field{R}^n$, $i=1,\ldots N$. The ensemble mean and ensemble covariance, defined by
(\ref{eq:def-ens-mean}) and (\ref{eq:def-ens-covariance}), are denoted by
$\overline{\boldsymbol{X}}_{N}$ and $Q_{N}$ respectively, while the Kalman
Filter mean and covariance are denoted without subscripts, as $\overline{X}$
and $Q$.

There are several ways to produce an analysis ensemble corresponding to the
Kalman filter algorithm. Unbiased ensemble square root filters produce an
analysis ensemble $\boldsymbol{X}_{N}^{\mathrm{a}}$ such that
(\ref{eq:kf-mean}) and (\ref{eq:kf-covariance}) are satisfied for the ensemble
mean and covariance. This was not always
the case in early variants of ensemble square root filters
\cite{Sakov-2008-IFE}.
Because our results assume a linear model, we formulate the algorithm in the
linear case to cut down on additional notation.

\begin{algorithm}
[Unbiased square root ensemble filter]\label{alg:srf}
Generate the initial ensemble $\boldsymbol{X}_{N}^{(0),\mathrm{a}}$ at time $k = 0$
by sampling from a distribution with
mean $\overline{X}^{\left(  0\right)  }=\overline{X}^{(0),\mathrm{a}}$ and covariance $B$.
At time
$k\geq1$, the analysis ensemble members $X_{i}^{(k-1),\mathrm{a}}$ are advanced by the
model
\begin{equation}
X_{i}^{(k),\mathrm{f}}=M^{(k)}(X_{i}^{(k-1),\mathrm{a}})+b^{\left(  k\right)
},\text{\quad}i=1,\ldots,N, \label{eq:srf-forecast}%
\end{equation}
resulting in the forecast ensemble $\boldsymbol{X}^{(k),\mathrm{f}}$ with
ensemble mean and covariance%
\begin{equation}
\overline{\boldsymbol{X}}_{N}^{(k),\mathrm{f}}=E_{N}(\boldsymbol{X}%
_{N}^{(k),\mathrm{f}}),\quad Q_{N}^{(k),\mathrm{f}}=C_{N}(\boldsymbol{X}%
_{N}^{(k),\mathrm{f}}). \label{eq:srf-forecast-covariance}%
\end{equation}
The analysis step creates an ensemble $\overline{\boldsymbol{X}}%
_{N}^{(k),\mathrm{a}}$ which incorporates the observation that $d^{\left(
k\right)  }-H^{\left(  k\right)  }\overline{\boldsymbol{X}}_{N}^{(k),\mathrm{f}}$ has
mean zero and covariance $R^{\left(  k\right)  }$. The analysis ensemble
is constructed (in a manner determined by the specific method) to have
the ensemble mean and covariance given by
\begin{align}
E_{N}(\overline{\boldsymbol{X}}_{N}^{(k),\mathrm{a}})  &  =\mathcal{B}(\overline{\boldsymbol{X}}_{N}^{(k),\mathrm{f}}%
,Q_{N}^{(k),\mathrm{f}})=\overline{\boldsymbol{X}}_{N}^{(k),\mathrm{f}%
}+\mathcal{K}(Q_{N}^{(k),\mathrm{f}})(d^{(k)}-H(\overline{\boldsymbol{X}}%
_{N}^{(k),\mathrm{f}})),\label{eq:srf-mean}\\
C_{N}(\overline{\boldsymbol{X}}_{N}^{(k),\mathrm{a}})  &  =\mathcal{A}(Q_{N}^{(k),\mathrm{f}})=Q_{N}^{(k),\mathrm{f}%
}-\mathcal{K(}Q_{N}^{(k),\mathrm{f}})H^{(k)}Q_{N}^{(k),\mathrm{f}},
\label{eq:srf-covariance}%
\end{align}
where $\mathcal{B}$ and $\mathcal{A}$ are defined by (\ref{eq:def-operator-B}) and (\ref{eq:def-operator-A}) with
$d$, $H$, and $R$, at time $k$.
\end{algorithm}

The Ensemble Adjustment Kalman Filter (EAKF, \cite{Anderson-2001-EAK}), the
filter by Whitaker and Hamill \cite{Whitaker-2002-EDA}, and the Ensemble
Transform Kalman Filter (ETKF, \cite{Bishop-2001-ASE}) and its variants, the
Local Ensemble Transform Kalman Filter (LETKF, \cite{Hunt-2007-EDA}) and its
revision \cite{Wang-2004-WIB}, satisfy properties (\ref{eq:srf-mean}) and
(\ref{eq:srf-covariance}), cf., \cite{Livings-2008-UES,Tippett-2003-ESR}, and
therefore are of this form.

The background covariance $B$ does not need to be stored explicitly. Rather,
it is used in a factored form \cite{Buehner-2005-ESF,Fisher-1995-ECM},%
\begin{equation}
B=LL^{\mathrm{T}}. \label{eq:factored-B}%
\end{equation}
and only multiplication of $L$ times a vector is needed,%
\[
X_{i}^{\left(  0\right)  }=X^{\left(  0\right)  }+LY_{i},\quad Y_{i}\sim
N\left(  0,I\right)  .
\]

For example, a sample covariance of the form (\ref{eq:factored-B}) can be
created from historical data. To better represent the dominant part of the
background covariance, $L$ can consist of approximate eigenvectors for the
largest eigenvalues, obtained by a variant of the power method
\cite{Kalnay-2003-AMD}. Another popular choice is $L=TS$, where $T$ is a
transformation, such as FFT or wavelet transform, which requires no storage of
the matrix entries at all, and $S$ is a sparse matrix
\cite{Deckmyn-2005-WAR,Mirouze-2010-RCF,Pannekoucke-2007-FPW}. The covariance
of the Fourier transform of a stationary random field is diagonal, so even an
approximation with diagonal matrix $S$ is useful, and computationally very inexpensive.

A key observation of our analysis is that, in the linear case, the
transformations of the ensemble mean and covariance in unbiased square root
ensemble filters are exactly the same as in the Kalman filter, and they can be
described without reference to the ensemble at all, as shown in the next
lemma. All that is needed for the convergence of the unbiased square
root ensemble filter is the convergence of the initial ensemble mean and
covariance, and the continuity of the transformations, in a suitable
statistical sense.

\begin{lemma}
\label{lem:srf-mean-covariance-evolution}The transformations of the mean and
covariance in step $k$ of Algorithms \ref{alg:kf} and \ref{alg:srf} are the
same,%
\[%
\begin{tabular}
[c]{c|c}%
Kalman Filter &
\begin{tabular}
[c]{c}%
Unbiased Square Root\\
Ensemble Kalman Filter
\end{tabular}
\\
& \\
$\overline{X}^{(k),\mathrm{f}}=M^{\left(  k\right)  }\overline{X}%
^{(k-1),\mathrm{a}}+b^{\left(  k\right)  }$ & $\overline{\boldsymbol{X}}_{N}%
^{(k),\mathrm{f}}=M^{\left(  k\right)  }\overline{\boldsymbol{X}}_{N}^{(k-1),\mathrm{a}%
}+b^{\left(  k\right)  }$\\
$Q^{(k),\mathrm{f}}=M^{\left(  k\right)  }Q^{(k-1),\mathrm{a}}M^{\left(
k\right)  \ast}$ & $Q_{N}^{(k),\mathrm{f}}=M^{\left(  k\right)  }%
Q_{N}^{(k-1),\mathrm{a}}M^{\left(  k\right)  \ast}$\\
$\overline{X}^{(k),\mathrm{a}}=\mathcal{B}(\overline
{X}^{(k),\mathrm{f}},Q^{(k),\mathrm{f}})$ & $\overline{\boldsymbol{X}}_{N}^{(k),\mathrm{a}}=\mathcal{B}(\overline{\boldsymbol{X}}_{N}^{(k),\mathrm{f}},Q_{N}^{(k),\mathrm{f}%
})$\\
$Q^{(k),\mathrm{a}}=\mathcal{A}(Q^{(k),\mathrm{f}})$ &
$Q_{N}^{(k),\mathrm{a}}=\mathcal{A}(Q_{N}^{(k),\mathrm{f}%
})$%
\end{tabular}
\]

\end{lemma}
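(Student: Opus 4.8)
The plan is to show that each of the four lines in the table is an identity that holds for the ensemble statistics, not just for the Kalman filter quantities, and that the two columns are literally the same formulas applied to possibly different inputs. The forecast lines follow from linearity of the model and of the operations $E_N$ and $C_N$; the analysis lines are a direct restatement of the definitions of the unbiased square root filter.

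First I would treat the forecast mean. Starting from the definition of the ensemble forecast, $X_i^{(k),\mathrm{f}} = M^{(k)}(X_i^{(k-1),\mathrm{a}}) + b^{(k)} = M^{(k)} X_i^{(k-1),\mathrm{a}} + b^{(k)}$ by linearity of $\mathcal{M}^{(k)}$, I would apply $E_N$ from (\ref{eq:def-ens-mean}) and pull the linear operator $M^{(k)}$ out of the finite sum; the additive constant $b^{(k)}$ averages to itself. This gives $\overline{\boldsymbol{X}}_N^{(k),\mathrm{f}} = M^{(k)} \overline{\boldsymbol{X}}_N^{(k-1),\mathrm{a}} + b^{(k)}$, matching the left column. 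For the forecast covariance, I would use (\ref{eq:def-ens-covariance}): the centered members are $X_i^{(k),\mathrm{f}} - \overline{\boldsymbol{X}}_N^{(k),\mathrm{f}} = M^{(k)}(X_i^{(k-1),\mathrm{a}} - \overline{\boldsymbol{X}}_N^{(k-1),\mathrm{a}})$, since the $b^{(k)}$ terms cancel. Then for each $i$, the tensor product transforms as $(M u)\otimes(M u) = M(u\otimes u)M^{\ast}$, which follows directly from the definition of $\otimes$ (in the Hilbert space case, $((Mu)\otimes(Mv))w = Mu\langle Mv,w\rangle = Mu\langle v, M^{\ast}w\rangle = M((u\otimes v)(M^{\ast}w))$). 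Averaging over $i$ and using linearity of $M^{(k)}(\cdot)M^{(k)\ast}$ yields $Q_N^{(k),\mathrm{f}} = M^{(k)} Q_N^{(k-1),\mathrm{a}} M^{(k)\ast}$.

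The analysis lines require essentially no computation. Equations (\ref{eq:srf-mean}) and (\ref{eq:srf-covariance}) in Algorithm~\ref{alg:srf} state by definition that the analysis ensemble is constructed so that $E_N(\boldsymbol{X}_N^{(k),\mathrm{a}}) = \mathcal{B}(\overline{\boldsymbol{X}}_N^{(k),\mathrm{f}}, Q_N^{(k),\mathrm{f}})$ and $C_N(\boldsymbol{X}_N^{(k),\mathrm{a}}) = \mathcal{A}(Q_N^{(k),\mathrm{f}})$, with $\mathcal{B}$ and $\mathcal{A}$ the exact operators from (\ref{eq:def-operator-B}) and (\ref{eq:def-operator-A}). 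The Kalman filter analysis equations (\ref{eq:kf-mean}) and (\ref{eq:kf-covariance}) in Algorithm~\ref{alg:kf} say $\overline{X}^{(k),\mathrm{a}} = \mathcal{B}(\overline{X}^{(k),\mathrm{f}}, Q^{(k),\mathrm{f}})$ and $Q^{(k),\mathrm{a}} = \mathcal{A}(Q^{(k),\mathrm{f}})$ — the same maps. So the last two rows of the table hold by definition once we write $\overline{\boldsymbol{X}}_N^{(k),\mathrm{a}}$ and $Q_N^{(k),\mathrm{a}}$ for the ensemble mean and covariance of $\boldsymbol{X}_N^{(k),\mathrm{a}}$.

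I do not expect a genuine obstacle; the content of the lemma is conceptual rather than technical — it isolates the fact that the ensemble statistics evolve under the identical deterministic maps that drive the Kalman filter, so that later sections only need the continuity of $\mathcal{B}$ and $\mathcal{A}$ and an $L^p$ law of large numbers for the initial ensemble. The one point meriting a careful line is the tensor-product identity $(Mu)\otimes(Mv) = M(u\otimes v)M^{\ast}$ in the operator (infinite-dimensional) setting, which is where the adjoint $M^{\ast}$ enters; everything else is linearity of finite averages. I would state that identity explicitly and then assemble the four rows.
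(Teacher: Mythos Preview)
Your proposal is correct and follows essentially the same approach as the paper. The paper's proof is even more terse: it only verifies the forecast covariance identity $Q_N^{(k),\mathrm{f}} = M^{(k)} Q_N^{(k-1),\mathrm{a}} M^{(k)\ast}$ from (\ref{eq:srf-forecast-covariance}) and (\ref{eq:def-ens-covariance}), and then observes that the remaining three rows are already identical by the definitions in Algorithms~\ref{alg:kf} and~\ref{alg:srf}; your version simply spells out the forecast-mean line and the tensor-product identity $(Mu)\otimes(Mv)=M(u\otimes v)M^{\ast}$ explicitly, which is a welcome but inessential elaboration.
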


\begin{proof}
From (\ref{eq:srf-forecast-covariance}) and the definition
(\ref{eq:def-ens-covariance}) of ensemble covariance, it follows that
\begin{equation}
Q_{N}^{(k),\mathrm{f}}=M^{\left(  k\right)  }Q_{N}^{(k-1),\mathrm{a}%
}M^{\left(  k\right)  ^{\ast}}. \label{eq:forecast-ens-covariance}%
\end{equation}
The rest of the transformations in Algorithms \ref{alg:kf} and \ref{alg:srf}
are already the same.
\end{proof}

\section{Continuity of the analysis step}

\label{sec:continuity}

Fundamental to our analysis are continuity estimates for the operators
$\mathcal{A}$ and $\mathcal{B}$, which bring the forecast statistics to the
analysis statistics. Our general strategy is to first derive pointwise
estimates which apply to every realization of the random elements separately,
then integrate them to get the corresponding $L^{p}$
estimates. We will prove the following estimates for general covariances $Q$
and $P$, with the state covariance and sample covariance of the filters in mind. Likewise, estimates will be made with general elements $X$ and $Y$, with the state mean and sample mean of the filters in mind.

\subsection{Pointwise bounds}

The first estimate is the continuity of the Kalman gain matrix (or operator)
(\ref{eq:kalman-gain}) as a function of the forecast covariance, shown in the
next lemma and its corollary. See also \cite[Proposition 22.2]%
{LeGland-2011-LSA}.

\begin{lemma}
\label{lem:continuity-K}If $R>0$ and $P,Q\geq0,$ then%
\[
|\mathcal{K}(Q)-\mathcal{K}(P)|\leq\left\vert Q-P\right\vert \left\vert
H\right\vert |R^{-1}| \left(  1+\min\left\{  \left\vert P\right\vert
,\left\vert Q\right\vert \right\}  \left\vert H\right\vert ^{2} \left\vert R^{-1}\right\vert\right)  .
\]

\end{lemma}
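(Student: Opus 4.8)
The plan is to write the difference $\mathcal{K}(Q)-\mathcal{K}(P)$ using the definition (\ref{eq:def-operator-K}), then split it into a piece coming from the ``outer'' factor $Q$ versus $P$ and a piece coming from the difference of the two inverses $(HQH^\ast+R)^{-1}$ and $(HPH^\ast+R)^{-1}$. Writing $S_Q = HQH^\ast+R$ and $S_P = HPH^\ast+R$, so that $\mathcal{K}(Q) = QH^\ast S_Q^{-1}$, I would use the telescoping identity
\[
\mathcal{K}(Q)-\mathcal{K}(P) = (Q-P)H^\ast S_Q^{-1} + P H^\ast\bigl(S_Q^{-1}-S_P^{-1}\bigr),
\]
and then apply the resolvent identity $S_Q^{-1}-S_P^{-1} = S_P^{-1}(S_P - S_Q)S_Q^{-1} = S_P^{-1}H(P-Q)H^\ast S_Q^{-1}$. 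Combining gives
\[
\mathcal{K}(Q)-\mathcal{K}(P) = (Q-P)H^\ast S_Q^{-1} + P H^\ast S_P^{-1} H (P-Q) H^\ast S_Q^{-1}.
\]

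The next step is to bound each factor in operator norm. The crucial observation is that $S_Q = HQH^\ast + R \ge R$ since $HQH^\ast\ge 0$, and $R>0$; by the spectral-theory fact quoted in Section~\ref{sec:hilbert} ($0\le A\le B \Rightarrow |A|\le|B|$, applied to the inverses), this yields $|S_Q^{-1}| \le |R^{-1}|$, and similarly $|S_P^{-1}|\le|R^{-1}|$. These bounds are uniform in $Q,P\ge 0$, which is exactly what makes the estimate dimension-free. Applying submultiplicativity of the operator norm and $|H^\ast|=|H|$ to the displayed identity then gives
\[
|\mathcal{K}(Q)-\mathcal{K}(P)| \le |Q-P|\,|H|\,|R^{-1}| + |P|\,|H|\,|R^{-1}|\,|H|\,|Q-P|\,|H|\,|R^{-1}|,
\]
and factoring out $|Q-P|\,|H|\,|R^{-1}|$ produces the bound with $|P|$ in the parenthetical term. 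To get $\min\{|P|,|Q|\}$, I would simply repeat the argument with the roles of $P$ and $Q$ interchanged — i.e., group the telescoping so the ``kept'' outer factor is $Q$ rather than $P$, yielding the same estimate with $|Q|$ in place of $|P|$ — and then take the smaller of the two bounds.

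I do not expect a serious obstacle here; the only point requiring a little care is the Hilbert-space bookkeeping. One must confirm that $S_Q^{-1}$ and $S_P^{-1}$ are genuinely bounded operators — this is guaranteed because $R>0$ means bounded below, hence $S_Q \ge R$ is also bounded below and invertible with bounded inverse, per the conventions fixed in Section~\ref{sec:hilbert} — and that $P H^\ast S_P^{-1} H (P-Q) H^\ast S_Q^{-1}$ is a legitimate composition of bounded operators between the relevant spaces (the data space appears only through $S_P^{-1}, S_Q^{-1}$, which are bounded there even when the data space is infinite-dimensional and $R$ is not trace class, as the paper notes). Everything else is routine submultiplicativity, so the proof is short once the algebraic identity is in place.
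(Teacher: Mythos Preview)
Your proposal is correct and follows essentially the same approach as the paper: a telescoping split of $\mathcal{K}(Q)-\mathcal{K}(P)$, the resolvent identity for the difference of inverses, the uniform bound $|S_Q^{-1}|,|S_P^{-1}|\le|R^{-1}|$ from $S_Q,S_P\ge R>0$, and the role-swap to obtain the $\min$. The only cosmetic difference is that the paper first conjugates by $R^{-1/2}$ to reduce to inverses of the form $(I+A)^{-1}$ (where $|(I+A)^{-1}|\le1$ is immediate) before applying the resolvent identity, whereas you apply the resolvent identity directly to $S_Q^{-1}-S_P^{-1}$ and invoke $|S_Q^{-1}|\le|R^{-1}|$; both routes yield the identical intermediate bound $|S_Q^{-1}-S_P^{-1}|\le|Q-P|\,|H|^2|R^{-1}|^2$.
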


\begin{proof}
Since $R>0$ and $P,Q\geq0$, $\mathcal{K}(Q)$ and $\mathcal{K}(P)$ are defined
in (\ref{eq:def-operator-K}). For $A$, $B\geq0$, we have the identity,%
\begin{equation}
(I+A)^{-1}-(I+B)^{-1}=(I+A)^{-1}(B-A)(I+B)^{-1}, \label{eq:resolvent-identity}%
\end{equation}
which is verified by multiplication by $I+A$ on the left and $I+B$ on
the right, and%
\begin{equation}
\left\vert (I+A)^{-1}-(I+B)^{-1}\right\vert \leq\left\vert B-A\right\vert ,
\label{eq:est-inv-diff}%
\end{equation}
which follows from (\ref{eq:resolvent-identity}) using the inequalities
$|(I+A)^{-1}|\leq1$, $|(I+B)^{-1}|\leq1$, because $A,B\geq0$. Now write%
\begin{equation}
(HQH^{\ast}+R)^{-1}=R^{-1/2}(R^{-1/2}HQH^{\ast}R^{-1/2}+I)^{-1}R^{-1/2},\label{eq:inverse-expansion}
\end{equation}
using the symmetric square root $R^{1/2}$ of $R$. By (\ref{eq:est-inv-diff})
with $A=R^{-1/2}HQH^{\ast}R^{-1/2}$ and $B=R^{-1/2}HPH^{\ast}R^{-1/2}$ and (\ref{eq:inverse-expansion}), we
have that%
\begin{align}
\left\vert (HQH^{\ast}+R)^{-1}-(HPH^{\ast}+R)^{-1}\right\vert  &
\leq  \left\vert Q-P\right\vert \left\vert H\right\vert ^{2}|R^{-1}|^2.
\label{eq:est-diff-HQ-HR}%
\end{align}
Since $HQH^{\ast}+R\geq R$, we have
\begin{equation}
|\left(  HQH^{\ast}+R\right)  ^{-1}|\leq|R^{-1}|. \label{eq:est-HQ-HR}%
\end{equation}
Using (\ref{eq:est-diff-HQ-HR}), (\ref{eq:est-HQ-HR}), and the definition of
the operator $\mathcal{K}$ from (\ref{eq:def-operator-K}), we have
\begin{align*}
|\mathcal{K}(Q)-\mathcal{K}(P)|=  &  |QH^{\ast}(HQH^{\ast}+R)^{-1}-PH^{\ast
}(HPH^{\ast}+R)^{-1}|\\
=  &  |QH^{\ast}(HQH^{\ast}+R)^{-1}-PH^{\ast}(HQH^{\ast}+R)^{-1}\\
&  +PH^{\ast}(HQH^{\ast}+R)^{-1}-PH^{\ast}(HPH^{\ast}+R)^{-1}|\\
\leq  &  \left\vert Q-P\right\vert \left\vert H\right\vert |R^{-1}|\left(
1+\left\vert P\right\vert \left\vert H\right\vert ^{2}\left\vert R^{-1}\right\vert \right)  .
\end{align*}
Swapping the roles $P$ and $Q$ yields 
\begin{align*}
|\mathcal{K}(Q)-\mathcal{K}%
(P)|\leq\left\vert Q-P\right\vert \left\vert H\right\vert |R^{-1}|\left(
1+\left\vert Q\right\vert \left\vert H\right\vert ^{2}\left\vert R^{-1}\right\vert \right),
\end{align*} which completes the proof.
\end{proof}

A pointwise bound on the Kalman gain follows.

\begin{corollary}
\label{cor:norm-kalman-gain}If $R>0$ and $Q\geq0$, then
\begin{equation}
|\mathcal{K}(Q)|\leq\left\vert Q\right\vert \left\vert H\right\vert | R^{-1}|.
\label{eq:norm-kalman-gain}%
\end{equation}

\end{corollary}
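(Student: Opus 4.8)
The statement to prove is the pointwise bound on the Kalman gain, Corollary~\ref{cor:norm-kalman-gain}: if $R>0$ and $Q\geq0$, then $|\mathcal{K}(Q)|\leq|Q||H||R^{-1}|$.

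The plan is to bound $\mathcal{K}(Q) = QH^{\ast}(HQH^{\ast}+R)^{-1}$ directly by submultiplicativity of the operator norm, exactly as in the telescoping argument already used in the proof of Lemma~\ref{lem:continuity-K}. First I would note that since $R>0$ and $Q\geq0$, the operator $\mathcal{K}(Q)$ is well-defined by \eqref{eq:def-operator-K}, with the inverse bounded (in the infinite-dimensional case this uses the definition of $R>0$ in Section~\ref{sec:hilbert}). Then I would write
\[
|\mathcal{K}(Q)| = |QH^{\ast}(HQH^{\ast}+R)^{-1}| \leq |Q|\,|H^{\ast}|\,|(HQH^{\ast}+R)^{-1}|.
\]
The factor $|H^{\ast}| = |H|$ since the operator norm of an operator equals that of its adjoint. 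For the last factor, I would reuse inequality \eqref{eq:est-HQ-HR}: since $HQH^{\ast}\geq 0$, we have $HQH^{\ast}+R\geq R > 0$, hence $(HQH^{\ast}+R)^{-1}\leq R^{-1}$, and by the spectral-theory fact that $0\leq A\leq B$ implies $|A|\leq|B|$ (stated in Section~\ref{sec:hilbert}), $|(HQH^{\ast}+R)^{-1}|\leq|R^{-1}|$. Combining the three bounds gives $|\mathcal{K}(Q)|\leq|Q||H||R^{-1}|$.

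There is no real obstacle here — this is a short consequence of submultiplicativity together with the monotonicity of the inverse on positive operators, both of which are already established in the excerpt. The only point requiring a word of care is that the bound \eqref{eq:est-HQ-HR} was stated in the proof of Lemma~\ref{lem:continuity-K} rather than as a standalone result, so in a clean write-up I would restate the one-line justification: $HQH^{\ast}+R\geq R$ gives $(HQH^{\ast}+R)^{-1}\leq R^{-1}$, whence the norm inequality. Everything goes through verbatim in the Hilbert space setting, since all the ingredients (adjoint norm equality, operator monotonicity of the inverse, boundedness of $R^{-1}$ when $R>0$) were set up in Section~\ref{sec:hilbert} precisely for this purpose.
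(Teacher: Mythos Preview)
Your argument is correct. The paper's own proof is even terser: it simply invokes Lemma~\ref{lem:continuity-K} with $P=0$, noting that $\mathcal{K}(0)=0$ and $\min\{|P|,|Q|\}=0$, which collapses the bound there to $|\mathcal{K}(Q)|\leq|Q||H||R^{-1}|$. Your direct argument via submultiplicativity and \eqref{eq:est-HQ-HR} is exactly the relevant piece of the proof of Lemma~\ref{lem:continuity-K} specialized to $P=0$, so the two are essentially the same approach with different packaging; the paper reuses the lemma as a black box, while you re-derive the one ingredient that matters.
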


\begin{proof}
Use Lemma \ref{lem:continuity-K} with $P=0$.
\end{proof}

\begin{corollary}
\label{cor:norm-BX}If $R>0$ and $Q\geq0$, then
\begin{equation}
|\mathcal{B}(X,Q)|\leq|X|+|Q||H||R^{-1}|(|d-HX|). \label{eq:norm-BX}%
\end{equation}

\end{corollary}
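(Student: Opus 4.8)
The statement to prove is the pointwise bound
\[
|\mathcal{B}(X,Q)|\leq|X|+|Q||H||R^{-1}|\,|d-HX|,
\]
which is Corollary~\ref{cor:norm-BX}. The plan is simply to unwind the definition of $\mathcal{B}$ from (\ref{eq:def-operator-B}) and apply the triangle inequality together with the already-established bound on $\mathcal{K}(Q)$ from Corollary~\ref{cor:norm-kalman-gain}. Concretely, $\mathcal{B}(X,Q)=X+\mathcal{K}(Q)(d-HX)$, so
\[
|\mathcal{B}(X,Q)|\leq|X|+|\mathcal{K}(Q)(d-HX)|\leq|X|+|\mathcal{K}(Q)|\,|d-HX|,
\]
where the last step uses that the operator norm is submultiplicative against the vector norm. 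Substituting $|\mathcal{K}(Q)|\leq|Q||H||R^{-1}|$ from (\ref{eq:norm-kalman-gain}) gives the claim.

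The hypotheses $R>0$ and $Q\geq0$ are exactly what is needed: they guarantee that $(HQH^{\ast}+R)^{-1}$ is well-defined and bounded (as discussed after (\ref{eq:kalman-gain}) in the excerpt), hence $\mathcal{K}(Q)$ and $\mathcal{B}(X,Q)$ are meaningful, and they are precisely the hypotheses under which Corollary~\ref{cor:norm-kalman-gain} applies. So no additional verification of well-definedness is required beyond citing that corollary.

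There is essentially no obstacle here — the result is a one-line consequence of the previous corollary and the triangle inequality. The only thing to be slightly careful about is the role of the additive term $X$: unlike a purely linear estimate, $\mathcal{B}$ is affine in $X$, so the bound naturally splits into the $|X|$ contribution and the Kalman-gain contribution, and one should not expect (or try to produce) a bound homogeneous in $X$. I would present the proof as the displayed two-step chain above, with a parenthetical pointer to Corollary~\ref{cor:norm-kalman-gain} for the factor $|\mathcal{K}(Q)|$.
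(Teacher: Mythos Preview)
Your proposal is correct and matches the paper's own proof essentially verbatim: the paper also unwinds the definition of $\mathcal{B}$, applies the triangle inequality, and invokes the Kalman-gain bound (\ref{eq:norm-kalman-gain}) from Corollary~\ref{cor:norm-kalman-gain}.
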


\emph{Proof}. By the definition of operator $\mathcal{B}$ in
(\ref{eq:def-operator-B}), the pointwise bound on the Kalman gain
(\ref{eq:norm-kalman-gain}), and the triangle inequality,
\[
|\mathcal{B}(X,Q)|=|X+\mathcal{K}(Q)(d-HX)|\leq|X|+|Q||H||R^{-1}|(|d-HX|).
\qquad\endproof
\]

The pointwise continuity of operator $\mathcal{A}$ also follows from Lemma
\ref{lem:continuity-K}. To reduce the notation we introduce the constant 
\begin{equation}
c=|H|^{2}|R^{-1}|. \label{eq:def-c}%
\end{equation}
\begin{lemma}
\label{lem:continuity-A}If $R>0$ and $P,Q\geq0,$ then
\[
|\mathcal{A}(Q)-\mathcal{A}(P)|\leq|Q-P|\left(  1+c|Q|+c|P|+c^2|P||Q|\right)  .
\]

\end{lemma}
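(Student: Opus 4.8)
The plan is to decompose $\mathcal{A}(Q)-\mathcal{A}(P)$ using the formula $\mathcal{A}(Q)=Q-\mathcal{K}(Q)HQ$ from (\ref{eq:def-operator-A}), and then bound each resulting piece with the triangle inequality together with Corollary~\ref{cor:norm-kalman-gain} and Lemma~\ref{lem:continuity-K}. First I would write
\[
\mathcal{A}(Q)-\mathcal{A}(P)
=(Q-P)-\bigl(\mathcal{K}(Q)HQ-\mathcal{K}(P)HP\bigr),
\]
and then split the second term by adding and subtracting a hybrid term. A convenient choice is
\[
\mathcal{K}(Q)HQ-\mathcal{K}(P)HP
=\bigl(\mathcal{K}(Q)-\mathcal{K}(P)\bigr)HQ+\mathcal{K}(P)H(Q-P),
\]
so that one factor of $Q-P$ is exposed in each summand.

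Next I would take norms. For the first summand, apply submultiplicativity of the operator norm and Lemma~\ref{lem:continuity-K}: using the bound $|\mathcal{K}(Q)-\mathcal{K}(P)|\le |Q-P|\,|H|\,|R^{-1}|(1+|P|\,|H|^{2}|R^{-1}|)$ (the variant with $|P|$, obtained before swapping roles, is the one to use here) gives a contribution of at most $|Q-P|\,|H|^{2}|R^{-1}|\,(1+c|P|)\,|Q| = |Q-P|\,(c|Q|+c^{2}|P||Q|)$, recalling $c=|H|^{2}|R^{-1}|$ from (\ref{eq:def-c}). For the second summand, Corollary~\ref{cor:norm-kalman-gain} gives $|\mathcal{K}(P)|\le |P|\,|H|\,|R^{-1}|$, hence that term is bounded by $|P|\,|H|\,|R^{-1}|\,|H|\,|Q-P| = c|P|\,|Q-P|$. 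Adding the bare $(Q-P)$ term contributes $|Q-P|$. Collecting all three pieces yields
\[
|\mathcal{A}(Q)-\mathcal{A}(P)|\le |Q-P|\bigl(1+c|P|+c|Q|+c^{2}|P||Q|\bigr),
\]
which is exactly the claimed estimate (and is symmetric in $P,Q$, as it must be).

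I do not expect a serious obstacle here; the only point requiring care is choosing the hybrid term so that the $|P|$-version versus the $|Q|$-version of Lemma~\ref{lem:continuity-K} lines up to produce a symmetric final bound rather than a lopsided one. With the split above, the first summand naturally pairs $|Q|$ (from the trailing factor $HQ$) with $|P|$ (from the Lemma~\ref{lem:continuity-K} estimate applied in the form that keeps $|P|$), and the second summand contributes the single $c|P|$ term, so symmetry appears automatically. An alternative split, $(\mathcal{K}(Q)-\mathcal{K}(P))HP+\mathcal{K}(Q)H(Q-P)$, would work equally well and produce the same bound with the roles of $P$ and $Q$ interchanged in the intermediate steps.
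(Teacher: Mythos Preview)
Your proposal is correct and follows essentially the same route as the paper: the paper also writes $\mathcal{A}(Q)-\mathcal{A}(P)=(Q-P)+\mathcal{K}(P)H(P-Q)+(\mathcal{K}(P)-\mathcal{K}(Q))HQ$ by inserting the hybrid term $\mathcal{K}(P)HQ$, then applies Corollary~\ref{cor:norm-kalman-gain} and the $|P|$-variant of Lemma~\ref{lem:continuity-K} to obtain the stated bound. Your discussion of the alternative split and the remark about symmetry are also in line with the paper's subsequent remark.
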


\emph{Proof}. Since $R>0$ and $P,Q\geq0$, it follows that $\mathcal{K}(Q)$,
$\mathcal{K}(P)$, $\mathcal{A}(Q)$, and $\mathcal{A}(P)$ are defined. From the
definition of operator $\mathcal{A}$ (\ref{eq:def-operator-A}), Lemma
\ref{lem:continuity-K}, and Corollary \ref{cor:norm-kalman-gain},
\begin{align*}
|\mathcal{A}(Q)-\mathcal{A}(P)|  &  =|\left(  Q-\mathcal{K}(Q)HQ\right)
-(P-\mathcal{K}(P)HP)|\\
&  =|Q-P+\mathcal{K}(P)HP-\mathcal{K}(Q)HQ|\\
&  =|Q-P+\mathcal{K}(P)HP-\mathcal{K}(P)HQ+\mathcal{K}(P)HQ-\mathcal{K}%
(Q)HQ|\\
&  \leq |Q-P| \left(  1+c|Q|+c|P|+c^2|P||Q||\right)  . \qquad\endproof
\end{align*}

\begin{remark}
The choice of $\left\vert P\right\vert $ in $\min\{|P|,|Q|\}$ in the proof
of Lemma \ref{lem:continuity-A} was made to preserve symmetry. Swapping the roles
of $P$ and $Q$ in the proof gives a sharper, but a more complicated bound
\begin{align*}
|\mathcal{A}(Q)-\mathcal{A}(P)|  &  \leq|Q-P| \left(  1+c|Q|+c|P|+c^2\cdot%
\min\left\{  \left\vert P\right\vert ^{2},\left\vert P\right\vert \left\vert
Q\right\vert ,\left\vert Q\right\vert ^{2}\right\}  \right)   .
\end{align*}

\end{remark}

Instead of setting $P=0$ above, we can get a well-known sharper pointwise
bound on $\mathcal{A}(Q)$ directly from
(\ref{eq:def-operator-A-without-kalman-gain}). The proof is written in a way
suitable for our generalization.

\begin{lemma}
If $R>0$ and $Q\geq0$, then
\begin{equation}
0\leq\mathcal{A}(Q)\leq Q, \label{eq:covariance-bound}%
\end{equation}
and
\begin{equation}
|\mathcal{A}(Q)|\leq|Q|. \label{eq:norm-of-AQ}%
\end{equation}

\end{lemma}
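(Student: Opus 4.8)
The plan is to establish the operator inequalities $0\le\mathcal{A}(Q)\le Q$ directly, from which the norm bound (\ref{eq:norm-of-AQ}) is immediate by the spectral-theory fact recorded in Section~\ref{sec:notation}, that $0\le A\le B$ implies $|A|\le|B|$. Throughout, the point to keep in mind is that $HQH^{\ast}+R\ge R>0$, so $(HQH^{\ast}+R)^{-1}$ is a bounded nonnegative operator; hence $\mathcal{A}(Q)$ in (\ref{eq:def-operator-A-without-kalman-gain}) is well-defined, and it is symmetric because $Q=Q^{\ast}$ and $(HQH^{\ast}+R)^{-1}$ is symmetric.

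For the upper bound I would simply rewrite the subtracted term of (\ref{eq:def-operator-A-without-kalman-gain}), using $Q=Q^{\ast}$, as
\[
Q-\mathcal{A}(Q)=QH^{\ast}(HQH^{\ast}+R)^{-1}HQ=(HQ)^{\ast}(HQH^{\ast}+R)^{-1}(HQ),
\]
which is of the form $T^{\ast}CT$ with $C=(HQH^{\ast}+R)^{-1}\ge 0$, hence nonnegative. This gives $\mathcal{A}(Q)\le Q$.

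For the lower bound I would avoid inverting $Q$, which need not be invertible and, in the infinite-dimensional data-space situation, cannot be, by factoring through $Q^{1/2}$. Writing
\[
\mathcal{A}(Q)=Q^{1/2}\bigl(I-Q^{1/2}H^{\ast}(HQH^{\ast}+R)^{-1}HQ^{1/2}\bigr)Q^{1/2}
\]
and setting $S=R^{-1/2}HQ^{1/2}$, which is bounded since $R^{-1/2}=(R^{1/2})^{-1}$ is bounded, the identity (\ref{eq:inverse-expansion}) gives $Q^{1/2}H^{\ast}(HQH^{\ast}+R)^{-1}HQ^{1/2}=S^{\ast}(SS^{\ast}+I)^{-1}S$. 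The push-through identity $S^{\ast}(SS^{\ast}+I)^{-1}=(S^{\ast}S+I)^{-1}S^{\ast}$, verified by multiplying out $(S^{\ast}S+I)S^{\ast}=S^{\ast}(SS^{\ast}+I)$, then yields
\[
S^{\ast}(SS^{\ast}+I)^{-1}S=(S^{\ast}S+I)^{-1}S^{\ast}S=I-(S^{\ast}S+I)^{-1}.
\]
Since $S^{\ast}S\ge 0$ we have $0\le(S^{\ast}S+I)^{-1}\le I$, so the middle factor above equals $(S^{\ast}S+I)^{-1}\ge 0$, whence $\mathcal{A}(Q)=Q^{1/2}(S^{\ast}S+I)^{-1}Q^{1/2}\ge 0$. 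Combining with the previous step gives $0\le\mathcal{A}(Q)\le Q$, and (\ref{eq:norm-of-AQ}) follows.

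I expect the only delicate point to be the justification that these algebraic manipulations — the splitting through $Q^{1/2}$ and the push-through identity — are legitimate in the Hilbert-space setting. This reduces to checking that each operator that gets inverted ($R^{1/2}$, $SS^{\ast}+I$, $S^{\ast}S+I$) is bounded below, which is exactly what the hypothesis $R>0$ provides; it is also the reason for routing the argument through $R^{-1/2}$ rather than through $Q^{-1}$ or a Woodbury-type formula, which is the form of the proof "suitable for our generalization."
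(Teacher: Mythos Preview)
Your proof is correct and follows the same overall architecture as the paper's: establish $\mathcal{A}(Q)\le Q$ by observing that the subtracted term is of the form $T^{\ast}CT$ with $C\ge 0$, then factor through $Q^{1/2}$ to reduce the lower bound $\mathcal{A}(Q)\ge 0$ to the inequality $Q^{1/2}H^{\ast}(HQH^{\ast}+R)^{-1}HQ^{1/2}\le I$. Where you diverge is in how this last inequality is obtained. The paper argues directly with $A=Q^{1/2}H^{\ast}$: from $A^{\ast}A+R\ge A^{\ast}A$ it gets $(A^{\ast}A+R)^{-1/2}A^{\ast}A(A^{\ast}A+R)^{-1/2}\le I$, and then invokes the spectral-radius identity $\rho(CD)=\rho(DC)$ to swap the order and conclude $A(A^{\ast}A+R)^{-1}A^{\ast}\le I$. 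You instead pull out $R^{-1/2}$ via (\ref{eq:inverse-expansion}), set $S=R^{-1/2}HQ^{1/2}$, and use the push-through identity $S^{\ast}(SS^{\ast}+I)^{-1}=(S^{\ast}S+I)^{-1}S^{\ast}$ to obtain the closed form $\mathcal{A}(Q)=Q^{1/2}(S^{\ast}S+I)^{-1}Q^{1/2}$. Your route is slightly more explicit---it actually identifies $\mathcal{A}(Q)$ as a manifestly nonnegative operator rather than merely bounding it---while the paper's spectral-radius trick avoids introducing $R^{-1/2}$ and is marginally shorter. Both are fully rigorous in the Hilbert-space setting under the standing hypothesis $R>0$.
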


\begin{proof}
By the definition of operator $\mathcal{A}$ in
(\ref{eq:def-operator-A-without-kalman-gain}),
\[
\mathcal{A}\left(  Q\right)  =Q-QH^{\ast}(HQH^{\ast}+R)^{-1}HQ\leq Q,
\]
because $QH^{\ast}(HQH^{\ast}+R)^{-1}HQ\geq0$. It remains to show that
$\mathcal{A}(Q)\geq0$. Note that for any $A$, since $R>0$,
\[
A^{\ast}A+R\geq A^{\ast}A,
\]
and, consequently,
\[
\left(  A^{\ast}A+R\right)  ^{-1/2}A^{\ast}A\left(  A^{\ast}A+R\right)
^{-1/2}\leq I.
\]
Since for $B\geq0$, $B\leq I$ is the same as the spectral radius $\rho\left(
B\right)  \leq1$, and, for any $C$ and $D$, $\rho\left(  CD\right)
=\rho\left(  DC\right)  $, we have%
\begin{equation}
A\left(  A^{\ast}A+R\right)  ^{-1}A^{\ast}\leq I. \label{eq:est-inv}%
\end{equation}
Using (\ref{eq:est-inv}) with $A=Q^{1/2}H^{\ast}$ gives
\[
Q^{1/2}H^{\ast}(HQH^{\ast}+R)^{-1}HQ^{1/2}\leq I,
\]
and, consequently%
\[
QH^{\ast}(HQH^{\ast}+R)^{-1}HQ\leq Q,
\]
which gives $\mathcal{A}(Q)\geq0.$

Since $\mathcal{A}(Q)$ and $Q$ are symmetric, (\ref{eq:covariance-bound})
implies 
(\ref{eq:norm-of-AQ}).
\end{proof}

The pointwise continuity of operator $\mathcal{B}$ follows from Lemma
\ref{lem:continuity-K} as well.

\begin{lemma}
\label{lem:continuity-B}If $R>0$, and $P,Q\geq0$, then,%
\begin{align}
|\mathcal{B}(X,Q)-\mathcal{B}(Y,P)|  &  \leq|X-Y|\left(  1+c|Q|\right)  +\label{eq:difference-between-BX-and-BY}|Q-P||H||R^{-1}|(1+c|P|)(|d-HY|).
\end{align}

\end{lemma}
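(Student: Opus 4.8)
The plan is to mimic the structure of the proof of Lemma~\ref{lem:continuity-A}: split the difference $\mathcal{B}(X,Q)-\mathcal{B}(Y,P)$ into a term that changes only the mean argument and a term that changes only the covariance argument, estimate each separately, and combine by the triangle inequality. Writing out the definition (\ref{eq:def-operator-B}), we have
\[
\mathcal{B}(X,Q)-\mathcal{B}(Y,P)=\bigl(X-Y\bigr)+\mathcal{K}(Q)(d-HX)-\mathcal{K}(P)(d-HY).
\]
The natural splitting is to insert and subtract $\mathcal{K}(Q)(d-HY)$, giving
\[
\mathcal{B}(X,Q)-\mathcal{B}(Y,P)=\bigl(X-Y\bigr)+\mathcal{K}(Q)\bigl(H(Y-X)\bigr)+\bigl(\mathcal{K}(Q)-\mathcal{K}(P)\bigr)(d-HY).
\]

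For the first two terms I would use $|X-Y|$ together with Corollary~\ref{cor:norm-kalman-gain}, $|\mathcal{K}(Q)|\le|Q||H||R^{-1}|$, and the definition $c=|H|^{2}|R^{-1}|$ from (\ref{eq:def-c}), so that $|X-Y|+|\mathcal{K}(Q)||H||X-Y|\le|X-Y|(1+c|Q|)$, which is exactly the first summand on the right-hand side of (\ref{eq:difference-between-BX-and-BY}). For the last term I would apply the operator norm bound, then Lemma~\ref{lem:continuity-K} in the form $|\mathcal{K}(Q)-\mathcal{K}(P)|\le|Q-P||H||R^{-1}|(1+|P||H|^{2}|R^{-1}|)=|Q-P||H||R^{-1}|(1+c|P|)$, using the choice $\min\{|P|,|Q|\}\le|P|$ so that the bound is stated in terms of $P$ (matching the right-hand side of (\ref{eq:difference-between-BX-and-BY}), where $|d-HY|$ appears and so the $P$-dependent form is the consistent one). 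This yields $|\mathcal{K}(Q)-\mathcal{K}(P)||d-HY|\le|Q-P||H||R^{-1}|(1+c|P|)(|d-HY|)$, the second summand.

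Adding the two estimates via the triangle inequality produces exactly (\ref{eq:difference-between-BX-and-BY}), so the proof is essentially bookkeeping once the splitting is chosen. I do not anticipate a genuine obstacle here; the only point requiring a small amount of care is the choice of which of $|P|$, $|Q|$ to keep in the application of Lemma~\ref{lem:continuity-K}, since Lemma~\ref{lem:continuity-K} is symmetric in $P,Q$ but the target inequality (\ref{eq:difference-between-BX-and-BY}) is not — one must select the $P$-version to be consistent with the $|d-HY|$ factor (had we split off $\mathcal{K}(P)(d-HX)$ instead, the $Q$-version and $|d-HX|$ would appear). Everything else — the norm of the Kalman gain, the identification of $c$, the triangle inequality — has already been established in the preceding lemmas and corollaries, so no new tools are needed.
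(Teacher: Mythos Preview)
Your proposal is correct and follows essentially the same approach as the paper's proof: the paper likewise inserts and subtracts $\mathcal{K}(Q)(d-HY)$, obtains the identical three-term splitting $X-Y+\mathcal{K}(Q)(HY-HX)+(\mathcal{K}(Q)-\mathcal{K}(P))(d-HY)$, and then applies Corollary~\ref{cor:norm-kalman-gain} and Lemma~\ref{lem:continuity-K} (with the $|P|$-version of the minimum) exactly as you describe. Your remark about why the $P$-variant of Lemma~\ref{lem:continuity-K} is the consistent choice is a nice observation that the paper leaves implicit.
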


\begin{proof}
Estimating the difference, and using the pointwise bound on the Kalman gain
(\ref{eq:norm-kalman-gain}) and the pointwise continuity of the Kalman gain
from Lemma \ref{lem:continuity-K},
\begin{align*}
|\mathcal{B}(X,Q)&-\mathcal{B}(Y,P)|   
\\&=|(X+\mathcal{K}%
(Q)(d-HX))-(Y+\mathcal{K}(P)(d-HY))|\\
&=    |X-Y+\mathcal{K}(Q)(HY-HX)+(\mathcal{K}(Q)-\mathcal{K}(P))(d-HY)|\\
&  \leq|X-Y|+c|Q||X-Y|+|Q-P||H||R^{-1}|(1+c|P|)(|d-HY|)
\end{align*}
which is (\ref{eq:difference-between-BX-and-BY}).
\end{proof}

\subsection{$L^{p}$ bounds}

Using the pointwise estimate on the continuity of operator $\mathcal{A}$, we
can now estimate its continuity in $L^{p}$ spaces of random vectors. We will only need the result with one
of the arguments random and the other one constant (i.e., non random), which
simplifies its statement and proof. This is because the application of these
estimates will be the ensemble sample covariance, which is random, and the
state covariance, which is constant. In a similar way we will estimate the continuity of operator $\mathcal{B}$ in $L^p$.

\begin{lemma}
\label{lem:Lp-continuity-A}Let $Q$ be a random operator such that $Q\geq0$
almost surely (a.s.), let $P\geq0$ be constant, and let $R>0$. Then, for all
$1\leq p<\infty$,
\begin{equation}
\Vert\mathcal{A}(Q)-\mathcal{A}(P)\Vert_{p}\leq(1+c|P|)\Vert Q-P\Vert
_{p}+(c+c^2|P|)\Vert Q\Vert_{2p}\Vert Q-P\Vert_{2p}.
\label{eq:lp-difference-between-AQ-and-AP}%
\end{equation}

\end{lemma}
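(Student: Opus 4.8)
The plan is to start from the pointwise estimate in Lemma~\ref{lem:continuity-A}, applied with the random operator $Q$ and the constant $P$. Using the choice of $\min\{|P|,|Q|\}=|P|$ there (or more directly, rewriting the proof of Lemma~\ref{lem:continuity-A} with $P$ constant), we get the pointwise bound
\[
|\mathcal{A}(Q)-\mathcal{A}(P)|\leq|Q-P|\bigl(1+c|P|\bigr)+|Q-P|\,|Q|\,\bigl(c+c^{2}|P|\bigr),
\]
which holds a.s. The first term on the right is linear in $|Q-P|$ with a deterministic constant, and the second term is a product of two random factors $|Q-P|$ and $|Q|$ multiplied by a deterministic constant. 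This decomposition is exactly what we need: take $\|\cdot\|_{p}$ of both sides and apply the triangle inequality in $L^{p}$.

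Next I would handle the two terms separately. For the first term, $\|\,|Q-P|(1+c|P|)\,\|_{p}=(1+c|P|)\|Q-P\|_{p}$ since the factor $1+c|P|$ is a constant. For the second term, $\|\,(c+c^{2}|P|)|Q-P|\,|Q|\,\|_{p}=(c+c^{2}|P|)\,\bigl\||Q-P|\,|Q|\bigr\|_{p}$, and here I would invoke the $L^{p}$ Cauchy--Schwarz inequality of Lemma~\ref{lem:lp-cauchy} with the two random variables $|Q-P|$ and $|Q|$ (strictly, the Hilbert--Schmidt-valued random elements $Q-P$ and $Q$), yielding $\bigl\||Q-P|\,|Q|\bigr\|_{p}\leq\|Q-P\|_{2p}\|Q\|_{2p}$. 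Combining the two bounds gives precisely \eqref{eq:lp-difference-between-AQ-and-AP}. One should note $c+c^{2}|P|$ factors neatly but we keep it in the stated form for later use.

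There is essentially no serious obstacle here; the only point requiring a little care is the bookkeeping of which norm (spectral vs.\ Hilbert--Schmidt) is meant by $|\cdot|$ and that the integrability hypotheses of Lemma~\ref{lem:lp-cauchy} are met, i.e.\ that $Q-P$ and $Q$ lie in $L^{2p}$. In the intended application $Q$ is the ensemble sample covariance, which has moments of all orders as soon as the initial ensemble does, so this is not restrictive, but for a clean statement we simply record that the right-hand side of \eqref{eq:lp-difference-between-AQ-and-AP} is interpreted as $+\infty$ when $\|Q\|_{2p}$ or $\|Q-P\|_{2p}$ is infinite, in which case the inequality is trivial. Otherwise the argument is just ``take expectations of the pointwise bound and apply Cauchy--Schwarz,'' which is the general strategy announced at the start of Section~\ref{sec:continuity}.
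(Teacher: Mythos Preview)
The proposal is correct and follows essentially the same approach as the paper's proof: both start from the pointwise bound of Lemma~\ref{lem:continuity-A}, take the $L^{p}$ norm, use the triangle inequality together with the fact that $P$ is constant, and apply the $L^{p}$ Cauchy--Schwarz inequality (Lemma~\ref{lem:lp-cauchy}) to the product term $|Q-P|\,|Q|$. The only cosmetic difference is that you group the four terms into $(1+c|P|)$ and $(c+c^{2}|P|)$ before taking expectations, whereas the paper does the grouping afterwards.
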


\emph{Proof}. From Lemma \ref{lem:continuity-A}, the triangle inequality,
Lemma \ref{lem:lp-cauchy}, and recognizing that $P$ is constant,
\begin{align*}
\Vert\mathcal{A}(Q)-\mathcal{A}(P)\Vert_{p}  &  \leq\Vert
|Q-P|(1+c|Q|+c|P|+c^{2}|P||Q|)\Vert_{p}\\
&  \leq\Vert Q-P\Vert_{p}+c\Vert Q\Vert_{2p}\Vert Q-P\Vert_{2p}\\
&  \qquad+c|P|\Vert Q-P \Vert_{p}+c^{2}|P|\Vert Q\Vert_{2p}\Vert
Q-P\Vert_{2p}\\
&  =(1+c|P|)\Vert Q-P\Vert_{p}+(c+c^2|P|)\Vert Q\Vert_{2p}\Vert Q-P\Vert_{2p}.
\qquad\endproof
\end{align*}

Instead of setting $P=0$ above, we get a better bound on $\Vert\mathcal{A}%
(Q)\Vert_{p}$ directly.

\begin{lemma}
Let $Q$ be a random operator such that $Q\geq0$ a.s., and $R>0$. Then, for all
$1\leq p<\infty$,
\begin{equation}
\Vert\mathcal{A}(Q)\Vert_{p}\leq\Vert Q\Vert_{p}. \label{eq:Lp-bound-A}%
\end{equation}

\end{lemma}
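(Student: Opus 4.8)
The plan is to mimic the pointwise-then-integrate strategy used throughout this section, starting from the sharp pointwise bound $|\mathcal{A}(Q)|\le|Q|$ established in the immediately preceding lemma, equation (\ref{eq:norm-of-AQ}). Since $Q\ge0$ a.s., that pointwise inequality holds for almost every realization of the random operator $Q$, so $|\mathcal{A}(Q)|\le|Q|$ pointwise a.s.

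\begin{proof}
Since $Q\geq0$ a.s. and $R>0$, the operator $\mathcal{A}(Q)$ is defined a.s., and the pointwise bound (\ref{eq:norm-of-AQ}) gives $|\mathcal{A}(Q)|\le|Q|$ a.s. Raising to the $p$-th power and taking expectations,
\[
\Vert\mathcal{A}(Q)\Vert_{p}^{p}=E\left(|\mathcal{A}(Q)|^{p}\right)\leq E\left(|Q|^{p}\right)=\Vert Q\Vert_{p}^{p}.
\]
Taking the $p$-th root yields (\ref{eq:Lp-bound-A}).
\end{proof}

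There is essentially no obstacle here: the only thing to be careful about is that (\ref{eq:norm-of-AQ}) was stated for deterministic $Q\ge0$, so one must invoke it realization-by-realization using the a.s. hypothesis $Q\ge0$, and note that $\mathcal{A}(Q)$ is then a well-defined random operator (measurability being inherited from the continuity of $\mathcal{A}$ on the cone $Q\ge0$, already implicit in Lemma~\ref{lem:continuity-A}). The monotonicity of the $L^p$ norm and the elementary fact that $0\le a\le b$ implies $a^p\le b^p$ do the rest. This parallels exactly the relationship between Lemma~\ref{lem:Lp-continuity-A} and its pointwise precursor Lemma~\ref{lem:continuity-A}, and the gain over simply setting $P=0$ in (\ref{eq:lp-difference-between-AQ-and-AP}) is that we avoid the spurious $\Vert Q\Vert_{2p}^{2}$ term and get the clean contraction-type estimate $\Vert\mathcal{A}(Q)\Vert_p\le\Vert Q\Vert_p$.
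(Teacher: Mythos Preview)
Your proof is correct and takes essentially the same approach as the paper: apply the pointwise bound (\ref{eq:norm-of-AQ}) realization-by-realization, raise to the $p$-th power, and take expectations. The paper's proof is the one-line version of exactly this argument.
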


\begin{proof}
From (\ref{eq:norm-of-AQ}), it follows that $E\left(  \left\vert
\mathcal{A}(Q)\right\vert ^{p}\right)  \leq E\left(  \left\vert Q\right\vert
^{p}\right)  .$
\end{proof}

Using the pointwise estimate on the continuity of the operator $\mathcal{B}$,
we estimate its continuity in $L^{p}$ spaces of random vectors. Again, we keep the arguments of one of
the terms constant, which is all we will need, resulting in a simplification.

\begin{lemma}
\label{lem:Lp-continuity-B}Let $Q$ and $X$ be random operators, and $P$ and
$Y$ be constant. Let $Q\geq0$ a.s., $P\geq0$, and $R>0$. Then,
 for all
$1\leq p<\infty$,
\begin{align*}
\Vert\mathcal{B}(X,Q)-\mathcal{B}(Y,P)\Vert_{p}  &  \leq\Vert X-Y\Vert
_{p}+c\Vert Q\Vert_{2p}\Vert X-Y\Vert_{2p}\\
&  \qquad+\Vert Q-P\Vert_{p}|H||R^{-1}|(1+c|P|)(|d-HY|).
\end{align*}

\end{lemma}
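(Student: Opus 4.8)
The plan is to integrate the pointwise bound of Lemma~\ref{lem:continuity-B} and apply the $L^p$ Cauchy--Schwarz inequality (Lemma~\ref{lem:lp-cauchy}), exactly as was done for operator $\mathcal{A}$ in Lemma~\ref{lem:Lp-continuity-A}. Starting from~(\ref{eq:difference-between-BX-and-BY}) with $P$ and $Y$ constant, we have pointwise (a.s.)
\[
|\mathcal{B}(X,Q)-\mathcal{B}(Y,P)|\leq |X-Y| + c|Q|\,|X-Y| + |Q-P|\,|H|\,|R^{-1}|(1+c|P|)(|d-HY|).
\]
Taking the $\|\cdot\|_p$ norm of both sides and using the triangle inequality in $L^p$ splits the right-hand side into three terms.

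For the first term, $\||X-Y|\|_p = \|X-Y\|_p$ directly. For the second term, since both $|Q|$ and $|X-Y|$ are random, I would apply Lemma~\ref{lem:lp-cauchy} with $U = Q$ and $V = X - Y$, noting $\||Q|\,|X-Y|\|_p \le \|Q\|_{2p}\|X-Y\|_{2p}$, so the term is bounded by $c\|Q\|_{2p}\|X-Y\|_{2p}$. For the third term, the factors $|H|$, $|R^{-1}|$, $(1+c|P|)$, and $(|d-HY|)$ are all deterministic constants (recall $c = |H|^2|R^{-1}|$ from~(\ref{eq:def-c})), so they pull out of the norm, leaving $|H||R^{-1}|(1+c|P|)(|d-HY|)\,\|Q-P\|_p$. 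Collecting the three contributions gives precisely the claimed inequality.

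There is essentially no obstacle here: the argument is a routine integration of an already-established pointwise estimate, mirroring the proof of Lemma~\ref{lem:Lp-continuity-A} almost verbatim. The only point requiring the slightest care is bookkeeping of which quantities are random and which are constant --- in particular recognizing that in the second term the product $|Q||X-Y|$ genuinely needs the $L^{2p}$-to-$L^p$ trade-off of Lemma~\ref{lem:lp-cauchy}, whereas in the third term $|d-HY|$ is non-random (because $d$ and $Y$ are) and hence no such trade-off is needed, so $\|Q-P\|_p$ appears rather than $\|Q-P\|_{2p}$. One should also verify that the hypotheses of Lemma~\ref{lem:continuity-B} are met a.s., which follows from $Q \ge 0$ a.s., $P \ge 0$, and $R > 0$ by assumption, so that $\mathcal{K}$ and $\mathcal{B}$ are well defined at the relevant arguments with probability one.

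Finally, I would remark that, as with Lemma~\ref{lem:Lp-continuity-A}, the use of the higher-order norm $\|\cdot\|_{2p}$ to control the lower-order norm $\|\cdot\|_p$ is characteristic of the approach and is exactly what makes the later induction over time steps close: each application of $\mathcal{B}$ or $\mathcal{A}$ costs one "level" of integrability, but since the initial ensemble has all moments, finitely many steps remain controlled in every $L^p$.
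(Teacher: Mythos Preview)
Your proof is correct and follows essentially the same approach as the paper: take the pointwise bound~(\ref{eq:difference-between-BX-and-BY}), apply the $L^p$ triangle inequality, use Lemma~\ref{lem:lp-cauchy} on the random product $|Q|\,|X-Y|$, and pull the deterministic factors out of the third term. Your additional remarks on bookkeeping of random versus constant quantities and on the role of the $L^{2p}$-to-$L^p$ trade-off are accurate and go slightly beyond the paper's terse presentation, but the argument is the same.
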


\proof Applying the $L^{p}$ norm to the pointwise bound
(\ref{eq:difference-between-BX-and-BY}), using the triangle inequality,
recognizing that $P$ and $Y$ are constant, and applying the Cauchy-Schwarz
inequality (Lemma \ref{lem:lp-cauchy}), we get
\begin{align*}
\Vert\mathcal{B}(X,Q)   -\mathcal{B}(Y,P)\Vert_{p} &=\Vert|X-Y|\left(  1+c|Q|\right)  +|Q-P||H||R^{-1}%
|(1+c|P|)|d-HY|\Vert_{p}\\
&  \leq\Vert X-Y\Vert_{p}+c\Vert Q\Vert_{2p}\Vert X-Y\Vert
_{2p}\\
&\qquad+\Vert Q-P\Vert_{p}|H||R^{-1}|(1+c|P|)(|d-HY|).\qquad\endproof
\end{align*}

\section{$L^{p}$ laws of large numbers}

\label{sec:LpLLN}

Similarly as in \cite{LeGland-2011-LSA,Mandel-2011-CEK}, we will work with
convergence in all $L^{p}$, $1\leq p<\infty$. To prove $L^{p}$ convergence of
the initial ensemble mean and covariance to the mean and covariance of the
background distribution, we need the corresponding laws of large numbers.

\subsection{$L^{p}$ convergence of the sample mean}

The $L^{2}$ law of large numbers for $X_{1},\ldots,X_{N}\in L^{2}\left(
V\right)  $ i.i.d. is classical:%
\begin{equation}
\left\Vert E_{N}\left(  \boldsymbol{X}_{N}\right)  -E\left(  X_{1}\right)
\right\Vert _{2}\leq\frac{1}{\sqrt{N}}\left\Vert X_{1}-E\left(  X_{1}\right)
\right\Vert _{2}\leq\frac{2}{\sqrt{N}}\left\Vert X_{1}\right\Vert _{2}.
\label{eq:L2-LLN}%
\end{equation}
The proof relies on the expansion (assuming $E\left(  X_{1}\right)  =0$
without loss of generality),
\begin{align}
\left\Vert E_{N}\left(  \boldsymbol{X}_{N}\right)  \right\Vert _{2}^{2}  &
=E\left( \left\langle \frac{1}{N}\sum_{i=1}^{N}X_{i},\frac{1}{N}\sum_{i=1}^{N}%
X_{i}\right\rangle\right) =\frac{1}{N^{2}}\sum_{i=1}^{N}\sum_{j=1}^{N}E\left(
\left\langle X_{i},X_{j}\right\rangle \right) \label{eq:L2sum}\\
&  =\frac{1}{N^{2}}\sum_{i=1}^{N}E\left(  \left\langle X_{i},X_{i}%
\right\rangle \right)  =\frac{1}{N}E\left(  \left\langle X_{1},X_{1}%
\right\rangle \right)  =\frac{1}{N}\left\Vert X_{1}\right\Vert _{2}%
^{2},\nonumber
\end{align}
which yields $\left\Vert E_{N}\left(  \boldsymbol{X}_{N}\right)  \right\Vert
_{2}\leq\left\Vert X_{1}\right\Vert _{2}/\sqrt{N}$. To obtain $L^{p}$ laws of
large numbers for $p\neq2$, the equality (\ref{eq:L2sum}) needs to be replaced
by the following.

\begin{lemma}
(Marcinkiewicz-Zygmund inequality) If\/ $1\leq p<\infty$ and $Y_{i}\in
L^{p}\left(  V\right)  $, $E(Y_{i})=0$ and $E(|Y_{i}|^{p})<\infty$,
$i=1,...,N$, then
\begin{equation}
E\left(  \left\vert \sum_{i=1}^{N}Y_{i}\right\vert ^{p}\right)  \leq
B_{p}E\left(  \sum_{i=1}^{N}|Y_{i}|^{2}\right)  ^{p/2},
\label{eq:M-Z-inequality-Hilbert}%
\end{equation}
where $B_{p}$ depends on $p$ only.
\end{lemma}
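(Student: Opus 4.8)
The plan is to prove the Marcinkiewicz--Zygmund inequality in the Hilbert-space-valued setting by reducing it to a scalar statement via a symmetrization argument together with a Khintchine-type inequality. First I would introduce independent Rademacher signs $\varepsilon_1,\dots,\varepsilon_N$, independent of the $Y_i$, and an independent copy $Y_i'$ of the sequence. By the standard symmetrization lemma, since $E(Y_i)=0$,
\[
E\Bigl(\Bigl|\sum_{i=1}^{N}Y_{i}\Bigr|^{p}\Bigr)
\le 2^{p}\,E\Bigl(\Bigl|\sum_{i=1}^{N}\varepsilon_{i}Y_{i}\Bigr|^{p}\Bigr),
\]
so it suffices to bound the symmetrized sum. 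Conditioning on the $Y_i$ and applying the Kahane--Khintchine inequality in the Hilbert space $V$ — which states that for fixed vectors $v_1,\dots,v_N\in V$ one has $E(|\sum_i\varepsilon_i v_i|^p)\le K_p\,(\sum_i|v_i|^2)^{p/2}$, with $K_p$ depending on $p$ only — gives, after taking expectation over the $Y_i$ again,
\[
E\Bigl(\Bigl|\sum_{i=1}^{N}\varepsilon_{i}Y_{i}\Bigr|^{p}\Bigr)
\le K_{p}\,E\Bigl(\Bigl(\sum_{i=1}^{N}|Y_{i}|^{2}\Bigr)^{p/2}\Bigr).
\]
Combining the two displays yields the claimed inequality with $B_p=2^pK_p$, which depends on $p$ only.

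An alternative, more self-contained route avoids invoking Kahane--Khintchine as a black box: after symmetrization one can use the fact that the Hilbert-space norm $|\cdot|^2=\langle\cdot,\cdot\rangle$ lets one expand $|\sum_i\varepsilon_iv_i|^2=\sum_{i,j}\varepsilon_i\varepsilon_j\langle v_i,v_j\rangle$, whose conditional expectation in the signs is exactly $\sum_i|v_i|^2$; then a hypercontractivity or iterated-conditioning argument (or simply reducing to the classical real-valued Marcinkiewicz--Zygmund inequality applied to the scalar variables $\langle Y_i,e_k\rangle$ summed against Bessel's identity) upgrades the $L^2$ identity to the $L^p$ bound. For the write-up I would most likely just cite the vector-valued Khintchine inequality, since it is standard and keeps the argument short, but I would remark that in $\mathbb{R}^n$ the result is the classical scalar Marcinkiewicz--Zygmund inequality applied coordinatewise combined with the norm equivalence — except that coordinatewise application loses dimension-independence of the constant, so the Khintchine route is the one that preserves the dimension-free constant that the rest of the paper needs.

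The main obstacle is keeping the constant $B_p$ genuinely independent of the dimension of $V$ (and valid in the infinite-dimensional case), since the naive approach of applying the scalar inequality entry by entry and summing introduces a factor depending on $n$. This is precisely why the symmetrization-plus-vector-valued-Khintchine argument is the right one: the Kahane--Khintchine inequality holds in any Banach space with a constant depending only on $p$, and a fortiori in any Hilbert space, so no dimensional dependence enters. A secondary point to handle carefully is the case $p<2$, where one cannot simply use Jensen in the wrong direction; the symmetrization lemma and the Khintchine inequality are both valid for all $1\le p<\infty$, so the argument goes through uniformly, but I would state the range explicitly. Finally, for the degenerate edge $p=2$ one recovers the exact identity \eqref{eq:L2sum} with $B_2=1$, which is consistent and can be noted as a sanity check.
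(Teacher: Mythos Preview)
Your argument is correct and is essentially the standard proof that Hilbert spaces satisfy the Marcinkiewicz--Zygmund inequality with a dimension-free constant: symmetrize to reduce to a Rademacher sum, then use Kahane--Khintchine to compare $L^p$ with $L^2$, and finally the parallelogram identity $E_\varepsilon\bigl|\sum_i\varepsilon_iv_i\bigr|^2=\sum_i|v_i|^2$ to land on the square function. The paper, by contrast, does not prove the lemma at all: it simply cites the classical scalar result for the finite-dimensional case and, for the general separable Hilbert space, invokes the fact that such a space has Rademacher type~2, together with Woyczy\'nski's result that type~2 is equivalent to the validity of \eqref{eq:M-Z-inequality-Hilbert} for all $p\ge1$. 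Your route is thus more self-contained---you are effectively unpacking the proof of the Woyczy\'nski implication in the special case of a Hilbert space---while the paper's route is shorter and situates the lemma in the broader Banach-space-geometry framework, which also explains why the result fails outside type~2 spaces. One small remark: your sanity check ``$B_2=1$'' refers to the direct computation \eqref{eq:L2sum}, not to the constant $2^pK_p$ produced by your general argument (which gives $B_2=4$); this is harmless but worth stating clearly. Also note that the paper's statement omits the independence hypothesis on the $Y_i$, which you correctly use in the symmetrization step; it is implicit from context.
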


\begin{proof}
For the finite dimensional case, see \cite{Marcinkiewicz-1937-FI} or \cite[p.
367]{Chow-1988-PT}. In the infinitely dimensional case, note that a separable
Hilbert space is a Banach space of Rademacher type 2 \cite[p. 159]%
{Araujo-1980-CLT}, which implies (\ref{eq:M-Z-inequality-Hilbert}) for any
$p\geq1$ \cite[Proposition 2.1]{Woyczynski-1980-MLL}. All infinitely
dimensional separable Hilbert spaces are isometric, so the same $B_{p}$ works
for any of them.
\end{proof}

The Marcinkiewicz-Zygmund inequality begets a variant of the weak law of large
numbers in $L^{p}$ norms, similarly as in \cite[Corollary 2, page
368]{Chow-1988-PT}. Note that the Marcinkiewicz-Zygmund inequality does not
hold in general Banach spaces, and in fact it characterizes Banach spaces of 
Rademacher type 2 \cite[Proposition 2.1]{Woyczynski-1980-MLL}, so it is important that $V=\mathbb{R}^{n}$
or $V$ is a separable Hilbert space, as assumed throughout.

\begin{theorem}
\label{lem:Lp-large-numbers} Let\/ $2\leq p<\infty$ and $X_{1},\ldots,X_{N}\in L^{p}\left(
V\right)  $ be i.i.d. Then,%
\begin{equation}
\left\Vert E_{N}\left(  \boldsymbol{X}_{N}\right)  -E\left(  X_{1}\right)
\right\Vert _{p}\leq\frac{C_{p}}{\sqrt{N}}\left\Vert X_{1}-E\left(
X_{1}\right)  \right\Vert _{p}\leq\frac{2C_{p}}{\sqrt{N}}\left\Vert
X_{1}\right\Vert _{p}, \label{eq:Lp-LLN}%
\end{equation}
where $C_{p}$ depends on $p$ only.
\end{theorem}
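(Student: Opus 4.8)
The plan is to reduce the $L^{p}$ law of large numbers to the Marcinkiewicz--Zygmund inequality, exactly as suggested by the placement of the lemma in the text. Without loss of generality, assume $E(X_{1})=0$, since we can always replace $X_{i}$ by $Y_{i}=X_{i}-E(X_{1})$; note $E(X_{1})$ exists because $X_{1}\in L^{p}(V)\subset L^{1}(V)$. Then $E_{N}(\boldsymbol{X}_{N})=\frac1N\sum_{i=1}^{N}Y_{i}$, and we must bound $\|\frac1N\sum_{i=1}^{N}Y_{i}\|_{p}$.

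First I would apply the Marcinkiewicz--Zygmund inequality to the i.i.d., mean-zero random elements $Y_{1},\ldots,Y_{N}$:
\[
E\left(\left|\sum_{i=1}^{N}Y_{i}\right|^{p}\right)\leq B_{p}\,E\left(\left(\sum_{i=1}^{N}|Y_{i}|^{2}\right)^{p/2}\right).
\]
Next I would control the right-hand side. Since $p/2\geq1$, the function $t\mapsto t^{p/2}$ is convex, so by Jensen's inequality applied to the average $\frac1N\sum_{i=1}^{N}|Y_{i}|^{2}$,
\[
\left(\sum_{i=1}^{N}|Y_{i}|^{2}\right)^{p/2}=N^{p/2}\left(\frac1N\sum_{i=1}^{N}|Y_{i}|^{2}\right)^{p/2}\leq N^{p/2}\cdot\frac1N\sum_{i=1}^{N}|Y_{i}|^{p}=N^{p/2-1}\sum_{i=1}^{N}|Y_{i}|^{p}.
\]
Taking expectations and using that the $Y_{i}$ are identically distributed gives $E\big((\sum_{i}|Y_{i}|^{2})^{p/2}\big)\leq N^{p/2}E(|Y_{1}|^{p})$. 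Combining, $E\big(|\sum_{i}Y_{i}|^{p}\big)\leq B_{p}N^{p/2}E(|Y_{1}|^{p})$, hence
\[
\left\|E_{N}(\boldsymbol{X}_{N})\right\|_{p}=\frac1N\left\|\sum_{i=1}^{N}Y_{i}\right\|_{p}\leq\frac1N\left(B_{p}N^{p/2}E(|Y_{1}|^{p})\right)^{1/p}=\frac{B_{p}^{1/p}}{\sqrt{N}}\|Y_{1}\|_{p},
\]
which is the first inequality in (\ref{eq:Lp-LLN}) with $C_{p}=B_{p}^{1/p}$. The second inequality follows from the triangle inequality $\|X_{1}-E(X_{1})\|_{p}\leq\|X_{1}\|_{p}+|E(X_{1})|\leq2\|X_{1}\|_{p}$, using $|E(X_{1})|\leq\|X_{1}\|_{1}\leq\|X_{1}\|_{p}$, together with $C_{p}\geq1$ (which can be arranged, or simply replace $C_{p}$ by $\max\{C_{p},1\}$).

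I do not expect any serious obstacle here; the only points requiring care are the two routine reductions — centering the variables and the Jensen/convexity step to pass from the $\ell^{2}$ sum inside the $p/2$ power to a sum of $p$-th powers — and making sure the constant $C_{p}$ genuinely depends on $p$ alone, which it does since $B_{p}$ does. One could alternatively bound $\|(\sum_i|Y_i|^2)^{1/2}\|_p$ directly by the triangle inequality in $L^{p/2}$ applied to the $|Y_i|^2$, giving $\|\sum_i|Y_i|^2\|_{p/2}\leq N\||Y_1|^2\|_{p/2}=N\|Y_1\|_p^2$, which yields the same conclusion; I would use whichever is cleaner in the write-up.
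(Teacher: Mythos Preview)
Your proof is correct and follows essentially the same route as the paper: center the variables, apply the Marcinkiewicz--Zygmund inequality, bound $\big(\sum_i |Y_i|^2\big)^{p/2}$ by $N^{p/2-1}\sum_i |Y_i|^p$ (the paper phrases this via H\"older with exponents $p/(p-2)$ and $p/2$, you via Jensen/convexity---these are the same inequality), take the $p$-th root, and finish with the triangle inequality. One small remark: the condition $C_p\geq 1$ is unnecessary for the second inequality, since $\frac{C_p}{\sqrt{N}}\|X_1-E(X_1)\|_p\leq\frac{2C_p}{\sqrt{N}}\|X_1\|_p$ follows directly from $\|X_1-E(X_1)\|_p\leq 2\|X_1\|_p$ regardless of the size of $C_p$.
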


\begin{proof}
If $p=2$, the statement becomes (\ref{eq:L2-LLN}). Let $p>2$, and consider the case $E\left(  X_{1}\right)  =0$. By H\"{o}lder's inequality with
the conjugate exponents $\frac{p-2}{p}$ and $\frac{2}{p}$,%
\begin{align}
\sum_{i=1}^{N}\left\vert X_{i}\right\vert ^{2}  &  =\sum_{i=1}^{N}1\left(
\left\vert X_{i}\right\vert ^{2}\right)  \leq\left(  \sum_{i=1}^{N}\left(
1^{p/p-2}\right)  \right)  ^{(p-2)/p}\left(  \sum_{i=1}^{N}\left(  \left\vert
X_{i}\right\vert ^{2}\right)  ^{p/2}\right)  ^{2/p}\nonumber\\
&  =N^{\left(  p-2\right)  /p}\left(  \sum_{i=1}^{N}\left\vert X_{i}\right\vert ^{p}\right)  ^{2/p}
\label{eq:Lp-LLN-intermediate-step-1}%
\end{align}
Using the Marcinkiewicz-Zygmund inequality (\ref{eq:M-Z-inequality-Hilbert})
and (\ref{eq:Lp-LLN-intermediate-step-1}),%
\begin{align}
E\left(  \left\vert \sum_{i=1}^{N}X_{i}\right\vert ^{p}\right)   &  \leq
B_{p}E\left(  \left(  \sum_{i=1}^{N}\left\vert X_{i}\right\vert ^{2}\right)
^{p/2}\right) \nonumber\\
&  \leq B_{p}E\left(  \left(  N^{\left(  p-2\right)  /p}\left(  \sum_{i=1}%
^{N}\left\vert X_{i}\right\vert ^{p}\right)  ^{2/p}\right)  ^{p/2}\right)
\nonumber\\
&  \leq B_{p}N^{p/2-1}E\left(  \sum_{i=1}^{N}\left\vert X_{i}\right\vert
^{p}\right)  =B_{p}N^{p/2}E\left(  \left\vert X_{1}\right\vert ^{p}\right)  ,
\label{eq:Lp-LLN-intermediate-step-2}%
\end{align}
because $\frac{p}{2}\frac{p-2}{p}=\frac{p}{2}-1$, and the $X_{i}$ are
identically distributed. Taking the $p$-th root of both sides of
(\ref{eq:Lp-LLN-intermediate-step-2}) yields%
\[
\left\Vert \sum_{i=1}^{N}X_{i}\right\Vert _{p}\leq B_{p}^{1/p}N^{1/2}%
\left\Vert X_{1}\right\Vert _{p},
\]
and the first inequality in (\ref{eq:Lp-LLN}) follows after dividing by $N$. The general case when
$E\left(  X_{1}\right)  \neq0$ follows from the triangle inequality.
\end{proof}

\subsection{$L^{p}$ convergence of the sample covariance}

For the convergence of the ensemble covariance, we use the $L^{p}$ law of
large numbers in Hilbert-Schmidt norm, because $\mathrm{HS}\left(  V\right)  $
is a separable Hilbert space, while $\left[  V\right]  $ is not even a Hilbert
space. Convergence in the norm of $\left[  V\right]  $, the operator norm,
then follows from (\ref{eq:hilbert-schmidt-dominates}). See also \cite[Lemma
22.3]{LeGland-2011-LSA} for a related result using entry-by-entry estimates.

\begin{theorem}
\label{thm:LpLLN-sample-cov}Let $X_{1},\ldots,X_{N}\in L^{2p}\left(  V\right)
$ be i.i.d. and $p\geq2$. Then%
\begin{equation}
\left\Vert \left\vert C_{N}\left(  \boldsymbol{X}_{N}\right)
-\operatorname*{Cov}\left(  X_{1}\right)  \right\vert _{\mathrm{HS}%
}\right\Vert _{p}\leq\left(  \frac{2C_{p}}{\sqrt{N}}+\frac{4C_{2p}^{2}}%
{N}\right)  \left\Vert X_{1}\right\Vert _{2p}^{2}. \label{eq:LpLLN-sample-cov}%
\end{equation}
where $C_{p}$ is a constant which depends on $p$ only; in particular,
$C_{2}=1.$
\end{theorem}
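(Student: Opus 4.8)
The plan is to reduce the sample covariance to a combination of a sample mean of the tensor products $X_i\otimes X_i$ and a quadratic term in the sample mean, then apply Theorem~\ref{lem:Lp-large-numbers} to each piece. Without loss of generality assume $E(X_1)=0$, so that $\operatorname{Cov}(X_1)=E(X_1\otimes X_1)$. Using the second form of the ensemble covariance in (\ref{eq:def-ens-covariance}), write
\[
C_N(\boldsymbol{X}_N)-\operatorname{Cov}(X_1)
=\Bigl(E_N(\boldsymbol{X}_N\otimes\boldsymbol{X}_N)-E(X_1\otimes X_1)\Bigr)
-\overline{\boldsymbol{X}}_N\otimes\overline{\boldsymbol{X}}_N.
\]
First I would bound the first bracket: the elements $X_i\otimes X_i$ are i.i.d.\ random elements of the Hilbert space $\mathrm{HS}(V)$, with $\|X_i\otimes X_i\|$ in $\mathrm{HS}$-norm equal to $|X_i|^2$ by (\ref{eq:hilbert-schmidt-tensor}), hence $X_i\otimes X_i\in L^p(\mathrm{HS}(V))$ because $X_i\in L^{2p}(V)$. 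Applying the $L^p$ law of large numbers (Theorem~\ref{lem:Lp-large-numbers}) in $\mathrm{HS}(V)$ to these elements gives
\[
\bigl\|\,|E_N(\boldsymbol{X}_N\otimes\boldsymbol{X}_N)-E(X_1\otimes X_1)|_{\mathrm{HS}}\,\bigr\|_p
\le\frac{2C_p}{\sqrt{N}}\bigl\|\,|X_1\otimes X_1|_{\mathrm{HS}}\,\bigr\|_p
=\frac{2C_p}{\sqrt{N}}\,\||X_1|^2\|_p
=\frac{2C_p}{\sqrt{N}}\,\|X_1\|_{2p}^2,
\]
where the last equality uses $\||X_1|^2\|_p=(E|X_1|^{2p})^{1/p}=\|X_1\|_{2p}^2$.

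Next I would handle the quadratic term. By (\ref{eq:hilbert-schmidt-tensor}), $|\overline{\boldsymbol{X}}_N\otimes\overline{\boldsymbol{X}}_N|_{\mathrm{HS}}=|\overline{\boldsymbol{X}}_N|^2$, so $\|\,|\overline{\boldsymbol{X}}_N\otimes\overline{\boldsymbol{X}}_N|_{\mathrm{HS}}\,\|_p=\|\,|\overline{\boldsymbol{X}}_N|^2\,\|_p=\|\overline{\boldsymbol{X}}_N\|_{2p}^2$. Since $E(X_1)=0$, the $L^{2p}$ law of large numbers (Theorem~\ref{lem:Lp-large-numbers}, valid because $2p\ge2$) gives $\|\overline{\boldsymbol{X}}_N\|_{2p}\le\frac{2C_{2p}}{\sqrt{N}}\|X_1\|_{2p}$, hence
\[
\bigl\|\,|\overline{\boldsymbol{X}}_N\otimes\overline{\boldsymbol{X}}_N|_{\mathrm{HS}}\,\bigr\|_p
\le\frac{4C_{2p}^2}{N}\,\|X_1\|_{2p}^2.
\]
Combining the two bounds with the triangle inequality in $L^p$ yields (\ref{eq:LpLLN-sample-cov}). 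The claim $C_2=1$ comes from the elementary $L^2$ estimate (\ref{eq:L2-LLN}), whose constant is $1$.

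I do not expect a serious obstacle; the only points requiring care are bookkeeping ones. One must verify that Theorem~\ref{lem:Lp-large-numbers} genuinely applies in the Hilbert space $\mathrm{HS}(V)$ rather than just in $V$ — this is fine because $\mathrm{HS}(V)$ is itself a separable Hilbert space (as noted in Section~\ref{sec:hilbert}), so the Marcinkiewicz–Zygmund machinery behind that theorem is available. One must also be careful that the reduction to $E(X_1)=0$ is legitimate for the covariance: both $C_N(\boldsymbol{X}_N)$ and $\operatorname{Cov}(X_1)$ are translation invariant, so replacing $X_i$ by $X_i-E(X_1)$ changes neither side, but it does change $\|X_1\|_{2p}$; the final bound is stated in terms of $\|X_1\|_{2p}$ with the original (uncentered) $X_1$, and $\|X_1-E(X_1)\|_{2p}\le 2\|X_1\|_{2p}$ absorbs this, consistently with the factor-of-$2$ slack already present in Theorem~\ref{lem:Lp-large-numbers}. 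The mild loss of sharpness in the constants $2C_p$ and $4C_{2p}^2$ is acceptable since the result only needs the rate $1/\sqrt N$ with a dimension-independent constant.
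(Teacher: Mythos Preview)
Your proposal is correct and follows essentially the same route as the paper's proof: the same decomposition of $C_N-\operatorname{Cov}$ into the tensor-product sample-mean term and the quadratic $\overline{\boldsymbol{X}}_N\otimes\overline{\boldsymbol{X}}_N$ term, the same application of Theorem~\ref{lem:Lp-large-numbers} in $\mathrm{HS}(V)$ for the first term and in $V$ (at exponent $2p$) for the second, and the same combination via the triangle inequality. Your extra remarks about the applicability of the Marcinkiewicz--Zygmund inequality in $\mathrm{HS}(V)$ and about the centering step are accurate and, if anything, slightly more careful than the paper's one-line ``without loss of generality'' and closing sentence.
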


\begin{proof}
Without loss of generality, let $E\left(  X_{1}\right)  =0$. Then%
\begin{align}
C_{N}\left(  \boldsymbol{X}_{N}\right)   &
-\operatorname*{Cov}\left(  X_{1}\right)  \label{eq:cov-err} \\
&  =\left(  E_{N}\left(  \boldsymbol{X}_{N}\otimes\boldsymbol{X}_{N}\right)
-E\left(  X_{1}\otimes X_{1}\right)  \right)  -E_{N}\left(  \boldsymbol{X}%
_{N}\right)  \otimes E_{N}\left(  \boldsymbol{X}_{N}\right)  ,\nonumber
\end{align}
where $\boldsymbol{X}_{N}\otimes\boldsymbol{X}_{N}= X_{1}\otimes
X_{1}+\ldots+X_{N}\otimes X_{N}  $. For the first term in
(\ref{eq:cov-err}), the $L^{p}$ law of large numbers (\ref{eq:Lp-LLN}) in
$\mathrm{HS}\left(  V\right)  $ yields%
\begin{align*}
E\left(  \left\vert E_{N}\left(  \boldsymbol{X}_{N}\otimes\boldsymbol{X}%
_{N}\right)  -E\left(  X_{1}\otimes X_{1}\right)  \right\vert _{\mathrm{HS}%
}^{p}\right)  ^{1/p} &  \leq\frac{2C_{p}}{\sqrt{N}}E\left(  \left\vert
X_{1}\otimes X_{1}\right\vert _{\mathrm{HS}}^{p}\right)  ^{1/p}\\
&  =\frac{2C_{p}}{\sqrt{N}}E\left(  \left\vert X_{1}\right\vert ^{2p}\right)
^{1/p}=\frac{2C_{p}}{\sqrt{N}}\left\Vert X_{1}\right\Vert _{2p}^{2},
\end{align*}
using (\ref{eq:hilbert-schmidt-tensor}). For the second term in
(\ref{eq:cov-err}), we use again (\ref{eq:hilbert-schmidt-tensor}) to get
\[
\left\vert E_{N}\left(  \boldsymbol{X}_{N}\right)  \otimes E_{N}\left(
\boldsymbol{X}_{N}\right)  \right\vert _{\mathrm{HS}}=\left\vert E_{N}\left(
\mathbf{X}_{N}\right)  \right\vert ^{2},
\]
and the $L^{p}$ law of large numbers in $V$ yields%
\begin{align*}
E\left(  \left\vert E_{N}\left(  \boldsymbol{X}_{N}\right)  \otimes
E_{N}\left(  \boldsymbol{X}_{N}\right)  \right\vert _{HS}^{p}\right)  ^{1/p}
&  =E\left(  \left\vert E_{N}\left(  \boldsymbol{X}_{N}\right)  \right\vert
^{2p}\right)  ^{1/p}\\
&  =\left\Vert E_{N}\left(  \boldsymbol{X}_{N}\right)  -0\right\Vert _{2p}%
^{2}\leq\left(  \frac{2C_{2p}}{\sqrt{N}}\left\Vert X_{1}\right\Vert
_{2p}\right)  ^{2}.
\end{align*}
It remains to use the triangle inequality for the general case.
\end{proof}

\section{Convergence of the unbiased squared root filter}

\label{sec:convergence}

By the law of large numbers, the sample mean and the sample covariance of the
initial ensemble converge to the mean and covariance of the background
distribution. Every analysis step is a continuous mapping of the mean and
covariance, and the convergence in the large ensemble limit follows. The
theorem below, which is the main result of this paper, quantifies this argument.

\begin{theorem}
\label{thm:lp-letkf-convergence} Assume that the state space $V$ is a finite
dimensional or separable Hilbert space, the initial state, denoted $X^{{(0)}}%
$, has a distribution on $V$ such that all moments exist, $E(|X^{(0)}%
|^{p})<\infty$ for every $1\leq p<\infty$, the initial ensemble
$\boldsymbol{X}_{N}^{\left(  0\right)  }$ is an i.i.d. sample from this
distribution, and the model is linear, $\mathcal{M}^{(k)}\left(  X\right)
=M^{(k)}X+b^{\left(  k\right)  }$ for every time $k$. 

Then, for all $k\geq 0$, the ensemble mean $\overline{\boldsymbol{X}}%
_{N}^{(k),\mathrm{a}}$ and covariance $Q_{N}^{(k),\mathrm{a}}$ from the
unbiased square root ensemble filter (Algorithm \ref{alg:srf}) converge to the
mean $\overline{X}^{(k),\mathrm{a}}$ and covariance $Q^{(k),\mathrm{a}}$ from
the Kalman filter (Algorithm \ref{alg:kf}), respectively, in all $L^{p}$, $1\leq p<\infty$, as
$N\rightarrow\infty$, with the convergence rate $1/\sqrt{N}$. Specifically,%
\begin{align}
\Vert\overline{\boldsymbol{X}}_{N}^{(k),\mathrm{a}}-\overline{X}%
^{(k),\mathrm{a}}\Vert_{p}  & \leq\frac{\operatorname*{const}\left(  p,k\right)
}{\sqrt{N}}%
,\label{eq:convergence-Lp-X}\\
\Vert Q_{N}^{(k),\mathrm{a}}-Q^{(k),\mathrm{a}}\Vert_{p}  & \leq\frac{\operatorname*{const}\left(  p,k\right)
}{\sqrt{N}}\label{eq:convergence-Lp-Q}%
\end{align}
for all $1\leq p<\infty$, and all $N=1,2,\ldots$, where we use $\operatorname*{const}\left(  p,k\right)  $ to denote a
generic constant which depends on $p$, $k$, on the norms of the various
constant (non-random) inputs and operators in the problem, and on the
background distribution, but not on the dimension of the state space or the
ensemble size. 
\end{theorem}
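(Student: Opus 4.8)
The plan is to prove the two convergence estimates \eqref{eq:convergence-Lp-X}--\eqref{eq:convergence-Lp-Q} simultaneously by induction on the time index $k$, carrying along as an inductive hypothesis that, in addition to the two stated bounds, the sample covariance $Q_N^{(k),\mathrm{a}}$ satisfies a uniform (in $N$) moment bound $\Vert Q_N^{(k),\mathrm{a}}\Vert_{2p}\le\operatorname{const}(p,k)$ for every $p$, and similarly $\Vert\overline{\boldsymbol{X}}_N^{(k),\mathrm{a}}\Vert_{2p}\le\operatorname{const}(p,k)$. Such a uniform moment bound is needed because the $L^p$ continuity estimates in Lemmas \ref{lem:Lp-continuity-A} and \ref{lem:Lp-continuity-B} contain a factor $\Vert Q_N\Vert_{2p}$ multiplying the difference $\Vert Q_N-Q\Vert_{2p}$, and without controlling it the induction would not close.

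For the base case $k=0$, note that $\overline{\boldsymbol{X}}_N^{(0),\mathrm{a}}=E_N(\boldsymbol{X}_N^{(0)})$ and $Q_N^{(0),\mathrm{a}}=C_N(\boldsymbol{X}_N^{(0)})$ are the sample mean and sample covariance of an i.i.d.\ sample from a distribution all of whose moments are finite. Then \eqref{eq:convergence-Lp-X} for $k=0$ is exactly the $L^p$ law of large numbers, Theorem \ref{lem:Lp-large-numbers} (using $\Vert X\Vert_p\le\Vert X\Vert_{\max\{p,2\}}$ to cover $1\le p<2$), and \eqref{eq:convergence-Lp-Q} for $k=0$ is Theorem \ref{thm:LpLLN-sample-cov}; the uniform moment bounds follow from the triangle inequality applied to those same estimates (e.g.\ $\Vert Q_N^{(0),\mathrm{a}}\Vert_p\le\Vert Q^{(0),\mathrm{a}}\Vert_p+\operatorname{const}(p)/\sqrt N\le\operatorname{const}(p)$).

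For the inductive step, assume the four bounds hold at time $k-1$ and push them through one forecast--analysis cycle using Lemma \ref{lem:srf-mean-covariance-evolution}, which says the ensemble statistics transform exactly as the Kalman filter statistics. The forecast step is linear: $\overline{\boldsymbol{X}}_N^{(k),\mathrm{f}}-\overline{X}^{(k),\mathrm{f}}=M^{(k)}(\overline{\boldsymbol{X}}_N^{(k-1),\mathrm{a}}-\overline{X}^{(k-1),\mathrm{a}})$ and $Q_N^{(k),\mathrm{f}}-Q^{(k),\mathrm{f}}=M^{(k)}(Q_N^{(k-1),\mathrm{a}}-Q^{(k-1),\mathrm{a}})M^{(k)\ast}$, so taking $\Vert\cdot\Vert_p$ picks up only factors of $|M^{(k)}|$ and $|M^{(k)}|^2$; the uniform moment bounds are preserved the same way. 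For the analysis step, apply the $L^p$ continuity of $\mathcal{B}$ (Lemma \ref{lem:Lp-continuity-B}) with the random arguments $X=\overline{\boldsymbol{X}}_N^{(k),\mathrm{f}}$, $Q=Q_N^{(k),\mathrm{f}}$ and the constant arguments $Y=\overline{X}^{(k),\mathrm{f}}$, $P=Q^{(k),\mathrm{f}}$, and the $L^p$ continuity of $\mathcal{A}$ (Lemma \ref{lem:Lp-continuity-A}) with $Q=Q_N^{(k),\mathrm{f}}$, $P=Q^{(k),\mathrm{f}}$; note $Q_N^{(k),\mathrm{f}}\ge0$ a.s.\ since it is a sample covariance, and $R^{(k)}>0$ by assumption. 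Each term on the right-hand side of those lemmas is either $\Vert Q_N^{(k),\mathrm{f}}-Q^{(k),\mathrm{f}}\Vert_{p}$ or $\Vert\overline{\boldsymbol{X}}_N^{(k),\mathrm{f}}-\overline{X}^{(k),\mathrm{f}}\Vert_{p}$ (possibly at level $2p$, which by monotonicity of $L^p$ norms is still controlled and still $O(1/\sqrt N)$ by the forecast estimates), multiplied by constants built from $|H^{(k)}|$, $|(R^{(k)})^{-1}|$, $|d^{(k)}|$, $|\overline{X}^{(k),\mathrm{f}}|$, $|Q^{(k),\mathrm{f}}|$, and — crucially — $\Vert Q_N^{(k),\mathrm{f}}\Vert_{2p}$, which is $O(1)$ by the carried moment bound. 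Collecting these yields \eqref{eq:convergence-Lp-X}--\eqref{eq:convergence-Lp-Q} at time $k$; the moment bounds at time $k$ follow once more by the triangle inequality against the (deterministic, finite) Kalman quantities $\overline{X}^{(k),\mathrm{a}}$, $Q^{(k),\mathrm{a}}$, plus $\Vert\mathcal{A}(Q_N^{(k),\mathrm{f}})\Vert_{2p}\le\Vert Q_N^{(k),\mathrm{f}}\Vert_{2p}$ from \eqref{eq:Lp-bound-A} and the analogous crude bound $\Vert\mathcal{B}(X,Q)\Vert_{2p}\le\Vert X\Vert_{2p}+c\Vert Q\Vert_{2p}|d^{(k)}|+\dots$ obtained by taking $L^{2p}$ norms of Corollary \ref{cor:norm-BX}.

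The step I expect to be the main obstacle — or at least the one requiring the most care — is the bookkeeping that keeps the induction self-consistent: because the $L^p$ continuity estimates for $\mathcal{A}$ and $\mathcal{B}$ trade an $L^p$ bound for products of $L^{2p}$ bounds, one must verify that the inductive hypothesis is stated at \emph{all} exponents $1\le p<\infty$ (so that the $2p$ appearing on the right is again covered), and that the generic constant $\operatorname{const}(p,k)$ genuinely depends only on $p$, on $k$, on the norms of the problem data at times $1,\dots,k$, and on the moments of the background distribution, but never on $\dim V$ — which holds precisely because every pointwise lemma in Section \ref{sec:continuity} was proved with dimension-free operator-norm bounds and every law of large numbers in Section \ref{sec:LpLLN} has a dimension-free constant ($C_p$, $B_p$). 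No genuinely new idea is needed beyond assembling these pieces; the Hilbert-space case is handled identically since all cited lemmas already cover it.
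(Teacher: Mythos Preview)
Your proposal is correct and follows essentially the same route as the paper: induction on $k$, base case from the $L^p$ laws of large numbers (Theorems \ref{lem:Lp-large-numbers} and \ref{thm:LpLLN-sample-cov}), forecast step handled by linearity of $M^{(k)}$, analysis step by Lemmas \ref{lem:Lp-continuity-A} and \ref{lem:Lp-continuity-B}, invoking the inductive hypothesis at exponent $2p$ to absorb the $\Vert Q_N\Vert_{2p}$ factor. The only cosmetic difference is that you carry the uniform moment bound on $Q_N^{(k),\mathrm{a}}$ (and on $\overline{\boldsymbol{X}}_N^{(k),\mathrm{a}}$, which is actually not needed) as an explicit part of the inductive hypothesis, whereas the paper derives it on the fly each step via the triangle inequality from the convergence estimate at $k-1$; the content is identical.
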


\proof We will proceed by induction on $k$.  For $k=0$ and $p\geq2$,
(\ref{eq:convergence-Lp-X}) and (\ref{eq:convergence-Lp-Q}) follow immediately
from the $L^{p}$ laws of large numbers, (\ref{eq:Lp-LLN}) and
(\ref{eq:LpLLN-sample-cov}), respectively. For $1\leq p<2,$ it is sufficient
to note that the $L^{p}$ norm is dominated by the $L^{2}$ norm.  


Let $k\geq1$ and assume that (\ref{eq:convergence-Lp-X}) and
(\ref{eq:convergence-Lp-Q}) hold with $k-1$ in place of $k$, for all $N>0$,
and all $1\leq p<\infty$. From (\ref{eq:filtering-forecast-mean}),
(\ref{eq:filtering-forecast-covariance}), and
(\ref{eq:forecast-ens-covariance}), it follows that
\begin{align}
|\overline{X}^{(k),\mathrm{f}}|,|Q^{(k),\mathrm{f}}|\le \operatorname*{const}\left(k\right),
\quad \|Q_{N}^{(k),\mathrm{f}}\|_p\leq \operatorname*{const}\left(p,k\right),\label{eq:a-priori-bounds}
\end{align}
for all $1\leq p<\infty$.
Comparing (\ref{eq:filtering-forecast-covariance})
and (\ref{eq:forecast-ens-covariance}), we have
\begin{align}
\Vert Q_{N}^{(k),\mathrm{f}}-Q^{(k),\mathrm{f}}\Vert_{p} &  =\Vert M^{\left(
k\right)  }Q_{N}^{(k-1),\mathrm{a}}M^{\left(  k\right)  \ast}-M^{\left(
k\right)  }Q^{(k-1),\mathrm{a}}M^{\left(  k\right)  \ast}\Vert_{p}\nonumber\\
&  \leq|M^{\left(  k\right)  }|^{2}\Vert Q_{N}^{(k-1),\mathrm{a}%
}-Q^{(k-1),\mathrm{a}}\Vert_{p},\label{eq:forecast-cov-diff}%
\end{align}
and from Lemma \ref{lem:Lp-continuity-A},
\begin{align}
\Vert Q_{N}^{(k),\mathrm{a}} &  -Q^{(k),\mathrm{a}}\Vert_{p}=\Vert
\mathcal{A}(Q_{N}^{(k),\mathrm{f}})-\mathcal{A}(Q^{(k),\mathrm{a}})\Vert
_{p}\nonumber\\
\leq &  (1+c^{\left(  k\right)  }|Q^{(k),\mathrm{f}}|)\Vert Q_{N}^{(k),\mathrm{f}}%
-Q^{(k),\mathrm{f}}\Vert_{p}\label{eq:QN-Q-diff}\\
&  +(c^{\left(  k\right)  }+(c^{\left(  k\right)  })^2)|Q^{(k),\mathrm{f}}|)\Vert Q_{N}^{(k),\mathrm{f}}\Vert_{2p}\Vert
Q_{N}^{(k),\mathrm{f}}-Q^{(k),\mathrm{f}}\Vert_{2p},\nonumber
\end{align}
where $c^{\left(  k\right)  }=|H^{\left(  k\right)  }|^{2}|(R^{\left(
k\right)})^{  -1}|$. Combining (\ref{eq:a-priori-bounds})--(\ref{eq:QN-Q-diff})
and using the induction assumption (\ref{eq:convergence-Lp-Q}), with $2p$ in
place of $p$ when necessary, we obtain%
\[
\Vert Q_{N}^{(k),\mathrm{a}}-Q^{(k),\mathrm{a}}\Vert_{p}\leq\frac
{\operatorname*{const}\left(  p,k\right)  }{\sqrt{N}}.
\]

For the convergence of the ensemble mean to the Kalman filter mean, we have
\begin{align*}
\Vert\overline{\boldsymbol{X}}_{N}^{(k),\mathrm{f}}-\overline{X}^{(k),\mathrm{f}}%
\Vert_{p}=\Vert M^{\left(  k\right)  }(\overline{\boldsymbol{X}}_{N}^{(k-1),\mathrm{a}}%
-\overline{X}^{(k-1),\mathrm{a}})\Vert_{p}\leq|M^{\left(  k\right)  }|\Vert
\overline{\boldsymbol{X}}_{N}^{(k-1),\mathrm{a}}-\overline{X}^{(k-1),\mathrm{a}}\Vert_{p}%
,\label{eq:forecast-mean-diff}%
\end{align*}
from Lemma \ref{lem:srf-mean-covariance-evolution}. By Lemma
\ref{lem:Lp-continuity-B},%
\begin{align*}
\Vert\overline{\boldsymbol{X}}_{N}^{(k),\mathrm{a}}-\overline{X}%
^{(k),\mathrm{a}}\Vert_{p} &  =\Vert\mathcal{B}(\overline{\boldsymbol{X}}%
_{N}^{(k),\mathrm{f}},Q_{N}^{(k),\mathrm{f}})-\mathcal{B}(\overline
{X}^{\left(  k\right),  \mathrm{f}},Q^{(k),\mathrm{f}})\Vert_{p}\\
\leq &  \Vert\overline{\boldsymbol{X}}_{N}^{(k),\mathrm{f}}-\overline
{X}^{\left(  k\right),  \mathrm{f}}\Vert_{p}+c^{(k)}\Vert
Q_{N}^{(k),\mathrm{f}}\Vert_{2p}\Vert\overline{\boldsymbol{X}}_{N}%
^{(k),\mathrm{f}}-\overline{X}^{\left(  k\right),  \mathrm{f}}\Vert_{2p}\\
&  +\Vert Q_{N}^{(k),\mathrm{f}}-Q^{(k),\mathrm{f}}\Vert_{p}|H||R^{-1}%
|(1+c^{(k)}|Q^{(k),\mathrm{f}}|)(|d-H\overline{X}^{\left(  k\right)
,\mathrm{f}}|),
\end{align*}
which, together with (\ref{eq:a-priori-bounds}) and
(\ref{eq:forecast-cov-diff}), and the induction assumption (\ref{eq:convergence-Lp-Q}), with $2p$ in
place of $p$ when necessary, yields
\[
\Vert\overline{\boldsymbol{X}}_{N}^{(k),\mathrm{a}}-\overline{X}%
^{(k),\mathrm{a}}\Vert_{p}\leq\frac{\operatorname*{const}\left(  p,k\right)
}{\sqrt{N}}.\qquad\endproof
\]

\bibliographystyle{siam}
\bibliography{../../references/geo,../../references/other}

\newcommand{\noopsort}[1]{}
\begin{thebibliography}{10}

\bibitem{Anderson-1979-OF}
{\sc Brian D.~O. Anderson and John~B. Moore}, {\em Optimal filtering},
  Prentice-Hall, Englewood Cliffs, N.J., 1979.

\bibitem{Anderson-2001-EAK}
{\sc Jeffrey~L. Anderson}, {\em An ensemble adjustment {K}alman filter for data
  assimilation}, Monthly Weather Review, 129 (2001), pp.~2884--2903.

\bibitem{Anderson-2007-ACI}
{\sc J.~L. Anderson}, {\em An adaptive covariance inflation error correction
  algorithm for ensemble filters}, Tellus A, 59 (2007), pp.~210--224.

\bibitem{Anderson-1999-MCI}
{\sc Jeffrey~L. Anderson and Stephen~L. Anderson}, {\em A {M}onte {Ca}rlo
  implementation of the nonlinear filtering problem to produce ensemble
  assimilations and forecasts}, Monthly Weather Review, 127 (1999),
  pp.~2741--2758.

\bibitem{Araujo-1980-CLT}
{\sc Aloisio Araujo and Evarist Gin{\'e}}, {\em The central limit theorem for
  real and {B}anach valued random variables}, Wiley Series in Probability and
  Mathematical Statistics, John Wiley \& Sons, New York-Chichester-Brisbane,
  1980.

\bibitem{Bishop-2001-ASE}
{\sc Craig~H. Bishop, Brian~J. Etherton, and Sharanya~J. Majumdar}, {\em
  Adaptive sampling with the ensemble transform {K}alman filter. {P}art {I}:
  {T}heoretical aspects}, Monthly Weather Review, 129 (2001), pp.~420--436.

\bibitem{Buehner-2005-ESF}
{\sc Mark Buehner}, {\em Ensemble-derived stationary and flow-dependent
  background-error covariances: {E}valuation in a quasi-operational {NWP}
  setting}, Quarterly Journal of the Royal Meteorological Society, 131 (2005),
  pp.~1013--1043.

\bibitem{Burgers-1998-ASE}
{\sc Gerrit Burgers, Peter~Jan van Leeuwen, and Geir Evensen}, {\em Analysis
  scheme in the ensemble {K}alman filter}, Monthly Weather Review, 126 (1998),
  pp.~1719--1724.

\bibitem{Chow-1988-PT}
{\sc Yuan~Shih Chow and Henry Teicher}, {\em Probability theory.
  {I}ndependence, interchangeability, martingales}, Springer-Verlag, New York,
  second~ed., 1988.

\bibitem{DaPrato-2006-IIA}
{\sc Giuseppe Da~Prato}, {\em An introduction to infinite-dimensional
  analysis}, Springer-Verlag, Berlin, 2006.

\bibitem{Deckmyn-2005-WAR}
{\sc Alex Deckmyn and Lo{\"i}k Berre}, {\em A wavelet approach to representing
  background error covariances in a limited-area model}, Monthly Weather
  Review, 133 (2005), pp.~1279--1294.

\bibitem{Evensen-2009-DAE}
{\sc Geir Evensen}, {\em Data Assimilation: The Ensemble {K}alman Filter},
  Springer, 2nd~ed., 2009.

\bibitem{Fisher-1995-ECM}
{\sc M.~Fisher and P.~Courtier}, {\em Estimating the covariance matrices of
  analysis and forecast error in variational data assimilation}.
\newblock ECMWF Research Department Tech. Memo 220, 1995.

\bibitem{Furrer-2007-EHP}
{\sc Reinhard Furrer and Thomas Bengtsson}, {\em Estimation of high-dimensional
  prior and posterior covariance matrices in {K}alman filter variants}, J.
  Multivariate Anal., 98 (2007), pp.~227--255.

\bibitem{Hunt-2007-EDA}
{\sc B.~R. Hunt, E.~J. Kostelich, and I.~Szunyogh}, {\em Efficient data
  assimilation for spatiotemporal chaos: a local ensemble transform {K}alman
  filter}, Physica D: Nonlinear Phenomena, 230 (2007), pp.~112--126.

\bibitem{Jazwinski-1970-SPF}
{\sc Andrew~H. Jazwinski}, {\em Stochastic processes and filtering theory},
  Academic Press, New York, 1970.

\bibitem{Kalnay-2003-AMD}
{\sc Eugenia Kalnay}, {\em Atmospheric Modeling, Data Assimilation and
  Predictability}, Cambridge University Press, 2003.

\bibitem{Kelly-2014-WAE}
{\sc D.~T.~B. Kelly, K.~J.~H. Law, and A.~M. Stuart}, {\em Well-posedness and
  accuracy of the ensemble {K}alman filter in discrete and continuous time},
  Nonlinearity, 27 (2014), pp.~2579--2603.

\bibitem{Kuo-1975-GMB}
{\sc Hui~Hsiung Kuo}, {\em Gaussian measures in {B}anach spaces}, Lecture Notes
  in Mathematics, Vol. 463, Springer-Verlag, Berlin, 1975.

\bibitem{Lax-2002-FA}
{\sc Peter~D. Lax}, {\em Functional analysis}, Pure and Applied Mathematics
  (New York), Wiley-Interscience [John Wiley \& Sons], New York, 2002.

\bibitem{LeBreton-2000-AOA}
{\sc Alain Le~Breton and Marie-Christine Roubaud}, {\em Asymptotic optimality
  of approximate filters in stochastic systems with colored noises}, SIAM J.
  Control Optim., 39 (2000), pp.~917--927.

\bibitem{LeGland-2011-LSA}
{\sc F.~{Le~Gland}, V.~Monbet, and V.-D. Tran}, {\em Large sample asymptotics
  for the ensemble {K}alman filter}, in The Oxford Handbook of Nonlinear
  Filtering, Dan Crisan and Boris Rozovski\v{\i}, eds., Oxford University
  Press, 2011, pp.~598--631.

\bibitem{Livings-2008-UES}
{\sc David~M. Livings, Sarah~L. Dance, and Nancy~K. Nichols}, {\em Unbiased
  ensemble square root filters}, Phys. D, 237 (2008), pp.~1021--1028.

\bibitem{Mandel-2011-CEK}
{\sc Jan Mandel, Loren Cobb, and Jonathan~D. Beezley}, {\em On the convergence
  of the ensemble {K}alman filter}, Applications of Mathematics, 56 (2011),
  pp.~533--541.

\bibitem{Marcinkiewicz-1964-CP}
{\sc J{\'o}zef Marcinkiewicz}, {\em Collected papers}, Edited by Antoni
  Zygmund. With the collaboration of Stanislaw Lojasiewicz, Julian Musielak,
  Kazimierz Urbanik and Antoni Wiweger. Instytut Matematyczny Polskiej Akademii
  Nauk, Pa\'nstwowe Wydawnictwo Naukowe, Warsaw, 1964.

\bibitem{Marcinkiewicz-1937-FI}
{\sc J.~Marcinkiewicz and A.~Zygmund}, {\em Sur les foncions
  ind{\'e}pendantes}, Fund. Math., 28 (1937), pp.~60--90.
\newblock Reprinted in \cite{Marcinkiewicz-1964-CP}, pp. 233--259.

\bibitem{Mirouze-2010-RCF}
{\sc I.~Mirouze and A.~T. Weaver}, {\em {Representation of correlation
  functions in variational assimilation using an implicit diffusion operator}},
  {Quarterly Journal of the Royal Meteorological Society}, {136} ({2010}),
  pp.~{1421--1443}.

\bibitem{Ocone-1996-ASO}
{\sc Daniel Ocone and Etienne Pardoux}, {\em Asymptotic stability of the
  optimal filter with respect to its initial condition}, SIAM J. Control
  Optim., 34 (1996), pp.~226--243.

\bibitem{Pannekoucke-2007-FPW}
{\sc Olivier Pannekoucke, Lo{\"\i}k Berre, and Gerald Desroziers}, {\em
  Filtering properties of wavelets for local background-error correlations},
  Quarterly Journal of the Royal Meteorological Society, {133} ({2007}),
  pp.~{363--379}.

\bibitem{Sakov-2008-IFE}
{\sc Pavel Sakov and Peter~R. Oke}, {\em Implications of the form of the
  ensemble transformation in the ensemble square root filters}, Monthly Weather
  Review, 136 (2008), pp.~1042--1053.

\bibitem{Simon-2006-OSE}
{\sc Dan Simon}, {\em Optimal State Estimation: {K}alman, $\mathrm{H}_\infty$,
  and Nonlinear Approaches}, John Wiley and Sons, 2006.

\bibitem{Tippett-2003-ESR}
{\sc Michael~K. Tippett, Jeffrey~L. Anderson, Craig~H. Bishop, Thomas~M.
  Hamill, and Jeffery~S. Whitaker}, {\em Ensemble square root filters}, Monthly
  Weather Review, 131 (2003), pp.~1485--1490.

\bibitem{Wang-2004-WIB}
{\sc Xuguang Wang, Craig~H. Bishop, and Simon~J. Julier}, {\em Which is better,
  an ensemble of positive--negative pairs or a centered spherical simplex
  ensemble?}, Monthly Weather Review, 132 (2004), pp.~1590--1605.

\bibitem{Whitaker-2002-EDA}
{\sc J.~S. Whitaker and T.~M. Hamill}, {\em Ensemble data assimilation without
  perturbed observations}, Monthly Weather Review, 130 (2002), pp.~1913--1924.

\bibitem{Woyczynski-1980-MLL}
{\sc Wojbor~A. Woyczy{\'n}ski}, {\em On {M}arcinkiewicz-{Z}ygmund laws of large
  numbers in {B}anach spaces and related rates of convergence}, Probab. Math.
  Statist., 1 (1980), pp.~117--131 (1981).

\bibitem{Zhang-2004-IIE}
{\sc F.~Zhang, Chris Snyder, and Juanzhen Sun}, {\em Impacts of initial
  estimate and observation availability on convective-scale data assimilation
  with an ensemble {K}alman filter}, Monthly Weather Review, 132 (2004),
  pp.~1238--1253.

\bibitem{Zhang-1991-LSE}
{\sc J.~F. Zhang, L.~Guo, and H.~F. Chen}, {\em ${L}_p$-stability of estimation
  errors of {K}alman filter for tracking time-varying parameters},
  International Journal of Adaptive Control and Signal Processing, 5 (1991),
  pp.~155--174.

\end{thebibliography}

\end{document}